\documentclass[11pt,reqno]{amsart}


\setlength{\textwidth}{155.0mm}
\setlength{\oddsidemargin}{-2.0mm}
\setlength{\evensidemargin}{-2.0mm}

\usepackage[active]{srcltx}  
\usepackage{amssymb,amsmath}
\usepackage[pdftex]{graphicx}
\usepackage{amssymb,amsmath}
\usepackage{verbatim}        

\renewenvironment{comment}{\begin{quote}\sf}{\end{quote}}

\def\freq{{\rm freq}}
\def\sig{\sigma}
\def\Int{\mbox{\rm Int}}
\def\nula{\nu_\lambda}

\newcommand{\diam}{\mbox{\rm diam}}

\newcommand{\be}{\begin{eqnarray}}
\newcommand{\ee}{\end{eqnarray}}
\newcommand{\supp}{\mbox{\rm supp}}

\newcommand{\card}{\#}
\def\wtil{\widetilde}

\newcommand{\half}{\frac{1}{2}}

\newcommand{\e}{{\varepsilon}}

\newcommand{\R}{{\mathbb R}}

\newcommand{\Nat}{{\mathbb N}}
\def\N{\Nat}

\newcommand{\Ak}{{\mathcal A}}

\newcommand{\Pk}{{\mathcal P}}


\newcommand{\Ek}{E}

\newcommand{\Uk}{{\mathcal U}}

\newcommand{\dist}{\mbox{\rm dist}}

\newcommand{\lam}{\lambda}

\newcommand{\gam}{\gamma}

\newcommand{\Leb}{{\mathcal L}}

\newcommand{\iii}{\mathbf{i}}
\newcommand{\jjj}{\mathbf{j}}
\newcommand{\kkk}{\mathbf{k}}

\def\ov{\overline}
\newtheorem{theorem}{Theorem}[section]

\newtheorem{lemma}[theorem]{Lemma}
\newtheorem{cor}[theorem]{Corollary}
\newtheorem{prop}[theorem]{Proposition}

\theoremstyle{definition}

\theoremstyle{definition}
\newtheorem{remark}[theorem]{Remark}

\numberwithin{equation}{section}

\input epsf.sty

\begin{document}

\thispagestyle{empty}

\begin{abstract}
Let $\nula^p$ be the distribution of the random series $\sum_{n=1}^\infty i_n \lam^n$, where
$i_n$ is a sequence of i.i.d. random variables taking the values 0,1 with probabilities $p,1-p$. These measures are the well-known (biased) Bernoulli convolutions.

In this paper we study the multifractal spectrum of $\nula^p$ for typical $\lam$. Namely, we investigate the size of the sets
\[
\Delta_{\lam,p}(\alpha) = \left\{x\in\R: \lim_{r\searrow 0} \frac{\log \nula^p(B(x,r))}{\log r} =\alpha\right\}.
\]
Our main results highlight the fact that for almost all, and in some cases all, $\lam$ in an appropriate range, $\Delta_{\lam,p}(\alpha)$ is nonempty and, moreover, has positive Hausdorff dimension, for many values of $\alpha$. This happens even in parameter regions for which $\nula^p$ is typically absolutely continuous.
\end{abstract}


\title[Multifractal structure]
{{ Multifractal structure
of Bernoulli convolutions}}

\author{Thomas Jordan}
\address{Thomas Jordan\\
University Walk\\
Bristol BS8 1TW\\
UK
}
\email{Thomas.Jordan@bristol.ac.uk}

\author{Pablo Shmerkin}
\address{Pablo Shmerkin\\
         University of Manchester. Oxford Road\\
         School of Mathematics. Alan Turing Building\\
         Manchester M13 9PL\\
         UK }
\email{Pablo.Shmerkin@manchester.ac.uk}

\author{Boris Solomyak}
\address{
Boris Solomyak\\
Department of Mathematics, University of Washington\\
Box 354350\\
Seattle, WA 98195-4350\\
USA
}
\email{solomyak@math.washington.edu}

\thanks{P.S. acknowledges support from EPSRC grant EP/E050441/1 and the University of Manchester.
B.S. was supported in part by NSF grants DMS-0654408 and DMS-0968879}

\subjclass[2000]{Primary 28A80.}

\keywords{Bernoulli convolutions, $\beta$-expansions, multifractal spectrum, self-similar measures}

\date{\today}

\maketitle

\thispagestyle{empty}


\section{Introduction} \label{sec:introduction}

\subsection{Background}

If $\mu$ is a measure on $\R^d$ (or, more generally, on a metric space), the \textit{local dimension} of $\mu$ at $x$ is defined as
\begin{equation} \label{eq:def-local-dim}
d(\mu,x) = \lim_{r\searrow 0} \frac{\log\mu(B(x,r))}{\log r},
\end{equation}
provided the limit exists. (Here, and throughout the paper, $B(x,r)$ denotes the open ball of center $x$ and radius $r$.) For many natural measures $\mu$, such as self-similar measures and measures invariant and ergodic under hyperbolic diffeomorphisms, the local dimension exists and is constant $\mu$-almost everywhere. However, even in this case there may be many points for which the local dimension takes exceptional values (or fails to exist). The multifractal analysis for local dimensions is broadly concerned with the study of the level sets
\[
\Delta_\mu(\alpha)=\{ x: d(\mu,x)=\alpha\}.
\]
The goal is to compute or estimate the \textit{dimension spectrum} of $\mu$, that is, the function
\[
f_\mu(\alpha) =  \dim_H(\Delta_\mu(\alpha)),
\]
where $\dim_H$ denotes Hausdorff dimension. Loosely speaking, a measure is termed \textit{multifractal} if $\dim_H(\Delta_\mu(\alpha))>0$ for a range of values of $\alpha$. For many natural measures, such as self-similar measures under the open set condition, it turns out that $f_\mu(\alpha)$ is the Legendre transform of the so-called $L^q$-spectrum $\tau_\mu(q)$, which is another quantity that reflects the global oscillations of $\mu$, and is often easier to compute. See e.g. \cite[Chapter 12]{Falconer97}.

A large literature is devoted to the study of the dimension spectrum of self-similar measures. In particular, in the case of self-similar measures satisfying the strong separation condition  the dimension spectrum was computed in \cite{CawleyMauldin} and this was extended to the self-similar measures satisfying the open set condition in \cite{ArbeiterPatzschke96}. However, despite substantial recent progress \cite{FengLauWang05, Feng07, FengLau09, Olsen09, Feng10}, the overlapping case remains rather mysterious. In particular, nearly all attention has been focused on \textit{singular} self-similar measures $\mu$. Very recently, Feng \cite[Proposition 5.1]{Feng10} has shown that certain absolutely continuous self-similar measures may also possess a rich multifractal structure. A main theme of this paper is to show that this is also the case for what is, perhaps, the simplest and most studied family of overlapping self-similar measures: (biased) Bernoulli convolutions. We review their definition and main properties.

\subsection{Bernoulli convolutions} \label{subsec:BCs}

For $\lam\in (0,1)$ and $p \in (0,1)$ let
$\nula^p$ be the $p$-biased Bernoulli convolution measure on the real line with contraction rate $\lam$, i.e., the self-similar measure satisfying the equation
\begin{equation}\label{eq:ss1}
\nula^p = p(\nula^p\circ S_0^{-1}) + (1-p)(\nula^p\circ S_1^{-1}),\ \ \mbox{where}\ S_j(x) = \lam (x + j), \ j=0,1.
\end{equation}
Equivalently, $\nula^p$ is the distribution of the random series $\sum_{n=1}^\infty i_n \lam^n$ where
$i_n$ is a sequence of i.i.d.\ random variables taking the values 0,1 with probabilities $p,1-p$. (In many papers the series starts with $n=0$; here it is more convenient
for us to start with $n=1$.)

Yet another useful characterization is the following: $\nu_\lam^p = \eta^p\circ\Pi_\lam^{-1}$, where $\eta^p$ is the Bernoulli measure $(p,1-p)^\N$ on $\{0,1\}^\N$, and
\[
\Pi_\lam((i_n)_{n=1}^\infty) = \sum_{n=1}^\infty i_n \lam^n = \lim_{n\to\infty} S_{i_1}\circ\cdots\circ S_{i_n}(0)
\]
is the ``natural projection'' from $\{0,1\}^{\N}$ to $\R$. In the unbiased, or symmetric, case $p=1/2$, we will write $\nula := \nula^{1/2}$.

We review some of the key facts about Bernoulli convolutions, with a view towards our investigations, while referring the reader to \cite{PeresSchlagSolomyak00, Solomyak04} and references therein for further background. Let
\[
s_p(\lam)= \frac{h(p)}{-\log(\lambda)},
\]
where $h(p) = -p\log(p)-(1-p)\log(1-p)$ is the $p$-entropy. Notice that $s_p(e^{-h(p)})=1$. As we will see, $s_p(\lam)$ is the ``typical'' or ``expected'' dimension of $\nu_\lam^p$. When $s_p(\lam)>1$, typically $\nu_\lam^p$ is absolutely continuous; when $s_p(\lam)<1$, on the other hand, $\nu_\lam^p$ is always singular:

\begin{theorem}\label{thm:absolute-continuity-BCs}{\em(Peres and Solomyak \cite{PeresSolomyak98})}

\textbf{(i)} For $p\in (\frac13,\frac23)$, the measure $\nu_\lam^p$ is absolutely continuous for almost every $\lam\in (e^{-h(p)},1)$. In particular, $\nu_\lam$ is absolutely continuous for almost every $\lam\in(\frac{1}{2},1)$.

\textbf{(ii)} On the other hand, $\dim_H(\nu_\lam^p)\le s_p(\lam)$ for all $\lam,p$, whence $\nu_\lam^p$ is singular for $\lam<e^{-h(p)}$. (Here $\dim_H(\cdot)$ denotes the Hausdorff dimension of a measure, defined as the infimum of the Hausdorff dimensions of sets of full measure.)
\end{theorem}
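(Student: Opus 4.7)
For part (ii), I would follow the standard cylinder-counting argument. For $\eta^p$-almost every $\omega \in \{0,1\}^\N$, the Shannon--McMillan--Breiman theorem gives $\eta^p([\omega|_n]) = \exp(-n(h(p)+o(1)))$, while by construction $\Pi_\lam([\omega|_n]) \subset B(\Pi_\lam(\omega), C\lam^n)$ with $C = \lam/(1-\lam)$. Pushing forward, $\nula^p(B(\Pi_\lam(\omega), C\lam^n)) \ge \exp(-n(h(p)+o(1)))$, so the upper local dimension of $\nula^p$ is at most $s_p(\lam) = h(p)/(-\log\lam)$ at $\nula^p$-almost every point, and a Billingsley/Frostman-type argument upgrades this to $\dim_H(\nula^p) \le s_p(\lam)$. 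Singularity for $\lam < e^{-h(p)}$ follows since $s_p(\lam) < 1$ in that case.

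For part (i), I would use the differentiation method pioneered in this context by Peres and Solomyak. Fix a compact interval $I = [\lam_0, \lam_1] \subset (e^{-h(p)}, 1)$ and set
\[
D_\lam(x) = \liminf_{r \searrow 0} \frac{\nula^p(B(x,r))}{2r}.
\]
By standard real analysis, the bound $\int_\R D_\lam \, d\nula^p < \infty$ implies $\nula^p \ll \Leb$. Using Fatou and Fubini, controlling $\int_I \int_\R D_\lam \, d\nula^p \, d\lam$ reduces to bounding
\[
\liminf_{r \searrow 0} \frac{1}{2r} \int_I (\eta^p \times \eta^p)\{(\omega, \omega') : |g_{\omega,\omega'}(\lam)| < r\} \, d\lam,
\]
where $g_{\omega,\omega'}(\lam) = \sum_n (i_n - i_n') \lam^n$ is a random power series with coefficients in $\{-1,0,1\}$. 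Swapping the integrals, the task becomes to estimate $\Leb\{\lam \in I : |g(\lam)| < r\}$ uniformly over all such series.

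The key technical ingredient is a transversality property on $I$: writing $g(\lam) = \pm\lam^{n_0}\phi(\lam)$ with $\phi(\lam) = 1 + \sum_{k \ge 1} \epsilon_k \lam^k$, $\epsilon_k \in \{-1,0,1\}$, one shows that $\phi$ has at most one zero on $I$, at which its derivative is bounded away from $0$ uniformly in $\phi$. This yields $\Leb\{\lam \in I : |g(\lam)| < r\} \le C r \lam_0^{-n_0}$, and combining with $\Pro(n_0(\omega, \omega') = n) = 2p(1-p)(p^2 + (1-p)^2)^{n-1}$ gives
\[
\int_I \int_\R D_\lam \, d\nula^p \, d\lam \le C \sum_n 2p(1-p)(p^2 + (1-p)^2)^{n-1} \lam_0^{-n},
\]
which converges provided $\lam_0 > p^2 + (1-p)^2$.

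The main obstacle is that in general $p^2 + (1-p)^2 > e^{-h(p)}$, so the first-order estimate above does not cover the full range $(e^{-h(p)},1)$. The remedy is a ``power trick'': group the coordinates into blocks of length $k$, view $\nula^p$ as the self-similar measure with $2^k$ maps and probabilities $p^j(1-p)^{k-j}$, and rerun the argument at scale $\lam^k$. A suitable transversality estimate at this level yields a $k$-dependent threshold whose infimum over $k$ approaches $e^{-h(p)}$ when $p \in (1/3,2/3)$, so every $\lam > e^{-h(p)}$ is eventually covered. The bulk of the work, and the source of the restriction $p \in (1/3,2/3)$, lies in verifying transversality for the power series with coefficients in the enlarged (but still structured) set arising from blocking --- a delicate explicit polynomial analysis. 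Specializing $p = 1/2$ then recovers Solomyak's original theorem for $\lam \in (1/2,1)$.
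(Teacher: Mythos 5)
The paper does not actually prove this theorem --- it is quoted from Peres and Solomyak \cite{PeresSolomyak98} as background --- so there is no internal proof to compare against; I will judge your argument on its own merits and against the cited source. Your part (ii) is correct and essentially complete in outline. For part (i) you have correctly identified the overall strategy of \cite{PeresSolomyak96,PeresSolomyak98}: bound $\int_I\int\underline{D}(\nu_\lam^p,x)\,d\nu_\lam^p(x)\,d\lam$, pass by Fubini to $\Leb\{\lam\in I:|g(\lam)|<r\}$ for $\{-1,0,1\}$ power series $g$, and invoke transversality. The gap is in the step that is supposed to carry the threshold from $p^2+(1-p)^2$ down to $e^{-h(p)}=p^p(1-p)^{1-p}$.

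The blocking (``power'') trick does not do this. Grouping coordinates into blocks of length $k$, the probability that two independent $\eta^p$-words agree on their first $m$ blocks is $\bigl(\sum_{\iii\in\{0,1\}^k}p_\iii^2\bigr)^m=(p^2+(1-p)^2)^{km}$, so the transversality estimate at scale $\lam^k$ produces the series $\sum_m(\lam^k)^{-m}(p^2+(1-p)^2)^{km}$, whose convergence condition is again $\lam>p^2+(1-p)^2$: blocking leaves the threshold unchanged. Since $p^p(1-p)^{1-p}<p^2+(1-p)^2$ strictly for every $p\neq\frac12$ (weighted AM--GM), your argument as written never reaches the claimed interval in the biased case (it does recover the unbiased case on the transversality interval, since there the two thresholds coincide at $\frac12$). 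The missing idea is to replace the \emph{average} cylinder mass $\E\eta^p([\omega|n])=(p^2+(1-p)^2)^n$ by the \emph{typical} one: by Shannon--McMillan--Breiman and Egorov one restricts to a set $\Sigma$ with $\eta^p(\Sigma)\ge 1-\delta$ on which $\eta^p([\omega|n])\le C e^{-n(h(p)-\e)}$ uniformly, fixes $\omega\in\Sigma$, and only then sums over the position $n$ of the first disagreement; the resulting series $\sum_n\lam_0^{-n}e^{-n(h(p)-\e)}$ converges precisely when $\lam_0>e^{-h(p)}$. (This is exactly the device used in the present paper's proof of Theorem \ref{thm:small-dim-BCs-intermediate}; see the set $\Sigma$ and \eqref{eq:appl-unif-convergence}.) The convolution/power decomposition of $\nu_\lam^p$ into scaled copies of $\nu_{\lam^k}^p$ serves a different purpose --- pushing the upper end of the interval past the transversality bound toward $1$ --- and the restriction $p\in(\frac13,\frac23)$ arises from making that extension mesh with the entropy threshold, not from transversality of an enlarged coefficient set.
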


We make some further remarks:
\begin{remark}
Finer information on the regularity of the densities of $\nu_\lam^p$ is also available, see  \cite[Theorem 1.3]{PeresSolomyak98}, as well as the sharpening of these results obtained in \cite{PeresSchlag00}.  In the unbiased case $p=\frac12$ one can say rather more. For example, $\nu_\lam$ has a continuous density for almost every $\lam\in (2^{-1/2},1)$; this follows from Theorem \ref{thm:absolute-continuity-BCs} and a convolution argument, see \cite{Solomyak95}.
\end{remark}
\begin{remark}
 H. Toth \cite{Toth08} showed that for all $p\in (0,1)$, $\nula^p$ is absolutely continuous for almost every $\lam$ in a non-trivial interval $(\lam_p,1)$. However, for $p$ outside $(\frac13,\frac23)$ it is not known whether one can take $\lam_p = e^{-h(p)}$. (The interval $(\frac13,\frac23)$ can be expanded somewhat, but existing techniques break down for $p$ close to $0$ or $1$.)
\end{remark}

When $\nula^p$ is absolutely continuous, it is in fact equivalent to Lebesgue measure on its support:

\begin{theorem} \label{thm:mutual} {\em \cite{AlexanderParry88,MauldinSimon98}}
Let $I_\lam: = \supp(\nula)= [0,\frac{\lam}{1-\lam}]$.  If $\nula^p \ll \Leb$ then $\Leb|_{I_\lam}$ and $\nula^p$
are mutually absolutely continuous ($\Leb$ denotes Lebesgue measure on the line).
\end{theorem}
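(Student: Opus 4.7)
The plan is to show that the density $f := d\nula^p/d\Leb$ is positive at Lebesgue-a.e.\ point of $I_\lam$; combined with the hypothesis $\nula^p \ll \Leb$, this is exactly mutual absolute continuity. Let $E = \{f > 0\}$ and $Z = I_\lam \setminus E$. I first reduce to $\lam > 1/2$: by Theorem \ref{thm:absolute-continuity-BCs}(ii), $\nula^p$ is singular whenever $s_p(\lam) < 1$, which covers all $\lam < 1/2$ as well as $\lam = 1/2, p \neq 1/2$; and the remaining case $\lam = p = 1/2$ is trivial since $\nu_{1/2}^{1/2} = \Leb|_{[0,1]}$. For $\lam > 1/2$, $I_\lam$ is a non-degenerate interval and the IFS $\{S_0, S_1\}$ overlaps.

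A change of variables in \eqref{eq:ss1} yields the pointwise identity
\[
f(y) = \tfrac{p}{\lam} f(y/\lam)\mathbf{1}_{I_\lam}(y/\lam) + \tfrac{1-p}{\lam} f(y/\lam - 1)\mathbf{1}_{I_\lam}(y/\lam - 1)
\]
for a.e.\ $y$. Reading this at $y = S_i(x)$ gives $f(S_i x) \geq c_i f(x)$ with positive constants $c_i$; hence $S_0(E), S_1(E) \subseteq E$ modulo Lebesgue null. Since each $S_i$ is bi-Lipschitz, this invariance iterates: $S_\omega(E) \subseteq E$ modulo null for every finite word $\omega \in \{0,1\}^n$.

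The key quantitative input is orbit density with matching scale: direct computation gives
\[
S_\omega(y_0) = \lam^n y_0 + \sum_{j=1}^n i_j \lam^j \qquad (\omega = i_1 \cdots i_n),
\]
and the partial sums $\{\sum_{j=1}^n i_j \lam^j : i_j \in \{0,1\}\}$ lie within Hausdorff distance $\lam^{n+1}/(1-\lam)$ of $I_\lam$. Hence for every $z \in I_\lam$ and every $n$ there exists $\omega_n$ of length $n$ with $|S_{\omega_n}(y_0) - z| \leq C\lam^n$, where $C = C(y_0, \lam)$.

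To finish, assume for contradiction $\Leb(Z) > 0$ and pick Lebesgue density points $z_0 \in Z$ and $y_0 \in E$ (both necessarily interior to $I_\lam$). Choose $r > 0$ small enough that $B(y_0, r) \subseteq I_\lam$ and $\Leb(E \cap B(y_0, r)) \geq r$. For each $n$, since $S_{\omega_n}$ is a similarity of ratio $\lam^n$, the image $S_{\omega_n}(E \cap B(y_0, r))$ lies in $E$ (mod null) and in $B(z_0, (r+C)\lam^n)$, and has Lebesgue measure at least $r\lam^n$. Consequently $E$ has lower density at least $r/(2(r+C)) > 0$ at $z_0$ along the scales $(r+C)\lam^n \to 0$, contradicting the Lebesgue density theorem applied to $z_0 \in Z$. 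The main obstacle is establishing the quantitative orbit-density estimate; once that is in hand, the density-point contradiction is immediate.
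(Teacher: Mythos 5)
The paper does not prove this theorem; it is quoted from \cite{AlexanderParry88,MauldinSimon98}, so there is no internal proof to compare against. Your argument is correct and self-contained, and it is in the spirit of the Mauldin--Simon proof (a self-similarity invariance of the positivity set of the density combined with the Lebesgue density theorem). The reduction to $\lam>\frac12$ via Theorem \ref{thm:absolute-continuity-BCs}(ii) is valid (for $\lam<\frac12$, and for $\lam=\frac12$ with $p\neq\frac12$, one has $s_p(\lam)<1$, so the hypothesis $\nula^p\ll\Leb$ is vacuous, and $\nu_{1/2}^{1/2}=\Leb|_{[0,1]}$). The pointwise a.e.\ identity for $f$ obtained from \eqref{eq:ss1} does give $f(S_ix)\ge (p_i/\lam)f(x)$ for a.e.\ $x$, since both terms on the right are nonnegative and $S_i$ preserves null sets; hence $S_\omega(E)\subseteq E$ modulo null for all finite words. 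The orbit-density estimate is correct: for $\lam\ge\frac12$ the images $S_0(I_\lam),S_1(I_\lam)$ cover $I_\lam$, so every $z\in I_\lam$ admits an expansion $\sum_j a_j\lam^j$ and the level-$n$ partial sums are $\lam^{n+1}/(1-\lam)$-dense, giving $|S_{\omega_n}(y_0)-z|\le C\lam^n$. The final density-point contradiction (lower density of $E$ at $z_0$ at least $r/(2(r+C))$ along the scales $(r+C)\lam^n$, against $z_0$ being a density point of $Z$) is sound. One could streamline the last step by noting that $\Leb(E)>0$ is automatic from $\int_E f=1$, but as written the proof is complete.
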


If $\lam^{-1}$ is a Pisot number, i.e. an algebraic integer larger than $1$ all of whose algebraic conjugates are smaller than $1$ in modulus, then $\nu_\lam^p$ is singular for all $p$. It is a major open question whether there are other values of $\lam\in(\frac{1}{2},1)$ for which $\nu_\lam^{1/2}$ is singular. The only other concrete class of numbers which are believed to, possibly, yield singular Bernoulli convolutions are reciprocal Salem numbers, see e.g. \cite{Feng10} for their definition and main properties. In particular, in \cite{Feng10} Feng proves that Bernoulli convolutions associated with (reciprocal) Salem numbers have a rich multifractal spectrum, a fact which (in the unbiased case) had been only established in the Pisot case.  In this paper we focus on results which are valid for all or typical $\lambda$, and hence do not deal with these matters.


\subsection{Results: the unbiased case} \label{subsec:results-unbiased}

In this section we consider the symmetric case $p=\frac{1}{2}$. Recall that $\nula=\nula^{1/2}$; we also write $s(\lam):=s_{1/2}(\lam)=\log 2/|\log\lam|$. When $s(\lam)>1$, one may ask about further properties of the density $d\nula/dx$ (i.e. the Radon-Nikodym derivative) for a typical $\lam$.  It is not hard to see that  $d\nula/dx$ becomes $0$ at the two endpoints of the support of $\nula$ (in fact, the local dimension at these two points is $s(\lam)>1$). In \cite[Question 8.3.1]{PeresSchlagSolomyak00} it was asked whether there can be any other zeros of the density. It is easy to deduce from Theorem~\ref{thm:mutual} and the (typical) existence of continuous density
that for a.e. $\lam > 2^{-\frac{1}{2}}$ the density
$\frac{d\nula}{dx}\in C(\R)$ does not vanish in $\Int(I_\lam)$.
 On the other hand we show that for {\em all}
 $\lam \in (\half,g)$, where $g = \frac{\sqrt{5}-1}{2}$ is the (reciprocal) golden ratio, there exist infinitely many $x\in \Int(I_\lam)$ such that
\begin{equation} \label{eq:deriva}
D(\nula,x)=\lim_{r\searrow 0} \frac{\nula (B(x,r))}{2r}=0.
\end{equation}
In fact, much more is true, as we will see below. Recall the definition of the local dimension $d(\mu,x)$ given in \eqref{eq:def-local-dim}. One may consider also the lower and upper local dimensions $\underline{d}(\mu,x)$, $\overline{d}(\mu,x)$, by taking the lower and upper limits respectively. These quantities are always defined.

In the next theorem, $\beta_c$ is the Komornik-Loreti constant defined as the unique positive solution of the equation $1 = \sum_{n=1}^\infty {\mathfrak m}_n x^{-n+1}$,
where $({\mathfrak m}_n)_1^\infty$ is the Thue-Morse sequence $0110\ 1001\ 1001\ 0110\ \ldots$, see \cite{KomornikLoreti98}. We have $\beta_c^{-1} = 0.5598\ldots$
Recall that $g = \frac{\sqrt{5}-1}{2} = 0.618034\ldots$

\begin{theorem} \label{thm:class1}
For all $\lam \in (\frac{1}{2}, g)$ there exists a set $\wtil{\Ak}_\lam \subset I_\lam$ such that
\begin{equation} \label{eq:locdim1}
d(\nula,x) =s(\lam) >1\ \ \mbox{for all}\ \  x\in \wtil{\Ak}_\lam.
\end{equation}
Moreover,

{\bf (i)} $\wtil{\Ak}_\lam$ is countably infinite for $\lam \in (\beta_c^{-1},g)$;

{\bf (ii)} $\dim_H\wtil{\Ak}_\lam > 0$ for $\lam \in (\frac{1}{2},\beta_c^{-1})$;

{\bf (iii)} $\dim_H\wtil{\Ak}_\lam \to 1$ as $\lam \searrow \frac{1}{2}$.
\end{theorem}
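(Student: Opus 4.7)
The plan is to take $\widetilde{\Ak}_\lam$ to consist of points in $\Int(I_\lam)$ whose natural inverse orbit stays at uniform positive distance from the overlap region $O_\lam := S_0(I_\lam)\cap S_1(I_\lam) = [\lam,\lam^2/(1-\lam)]$. Such points have a unique $\Pi_\lam$-preimage in $\{0,1\}^\Nat$, and the self-similarity relation \eqref{eq:ss1} then yields a clean recursion for $\nula(B(x,r))$; the three dimension statements will be inherited from classical results of Glendinning and Sidorov on the set of unique $\beta$-expansions ($\beta := 1/\lam$).

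Formally, since $\lam<g$, we have $O_\lam\subsetneq I_\lam$, so there is a well-defined map $T:I_\lam\setminus O_\lam\to I_\lam$ given by $Tx := S_{i(x)}^{-1}(x)$, where $i(x)\in\{0,1\}$ is the unique digit with $x\in S_{i(x)}(I_\lam)$. For $\delta>0$ set
\[
\widetilde{\Ak}_\lam^\delta := \{x\in \Int(I_\lam) : \dist(T^k x, O_\lam)\geq\delta \text{ for all } k\geq 0\},
\]
and $\widetilde{\Ak}_\lam:=\bigcup_{\delta>0}\widetilde{\Ak}_\lam^\delta$. For the upper bound on local dimension: the cylinder inclusion $\Pi_\lam([i_1\cdots i_n])\subset B(x, C\lam^n)$ has $\eta^{1/2}$-mass $2^{-n}$, giving the universal estimate $\nula(B(x,r))\gtrsim r^{s(\lam)}$ and hence $\overline{d}(\nula,x)\leq s(\lam)$ for every $x\in I_\lam$. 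For the matching lower bound on $\widetilde{\Ak}_\lam^\delta$: if $0<r<\delta$ then exactly one of the balls $S_0^{-1}B(x,r)$, $S_1^{-1}B(x,r)$ meets $I_\lam$, so \eqref{eq:ss1} collapses to
\[
\nula(B(x,r)) \;=\; \tfrac12\,\nula(B(Tx,\,r/\lam)).
\]
Since $Tx\in\widetilde{\Ak}_\lam^\delta$ as well, iterating $n$ times with $\lam^n\asymp r$ gives $\nula(B(x,r)) = 2^{-n}\,\nula(B(T^n x, r/\lam^n))\leq 2^{-n}\asymp r^{s(\lam)}$, so $\underline{d}(\nula,x)\geq s(\lam)$, and \eqref{eq:locdim1} follows.

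The three dimension statements now reduce to the Glendinning--Sidorov description of the set $\Uk_\lam$ of $x\in I_\lam$ with unique $\beta$-expansion: (a) $\Uk_\lam$ is countably infinite, and consists entirely of eventually periodic $T$-orbits, for $\lam\in(\beta_c^{-1},g)$; (b) $\dim_H\Uk_\lam>0$ for $\lam\in(\tfrac12,\beta_c^{-1})$; (c) $\dim_H\Uk_\lam\to 1$ as $\lam\searrow\tfrac12$. Eventually periodic orbits are automatically uniformly bounded away from $O_\lam$, so the countable set in (a) sits inside $\widetilde{\Ak}_\lam$, proving (i). For (ii)--(iii) we approximate $\Uk_\lam$ from inside by closed $T$-invariant sets modelled on subshifts of finite type arising from a Markov approximation of the $\beta$-shift; such invariant sets lie inside some $\widetilde{\Ak}_\lam^\delta$, and can be arranged so that their Hausdorff dimensions approach $\dim_H\Uk_\lam$.

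The main difficulty lies precisely in this last step. In principle $\Uk_\lam$ may contain $T$-orbits approaching $O_\lam$ arbitrarily closely, for which the clean self-similar recursion above breaks down; one must therefore verify that the uniformly $\delta$-separated part of $\Uk_\lam$ captures its full Hausdorff dimension as $\delta\searrow 0$. The requisite Markov approximation of the $\beta$-shift by subshifts of finite type is standard in principle, but needs care near the Komornik--Loreti threshold $\beta_c$, where the unique-expansion combinatorics carry the Thue--Morse fine structure appearing in the very definition of $\beta_c$.
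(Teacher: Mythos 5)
Your construction of $\wtil{\Ak}_\lam$ and your proof of the main claim \eqref{eq:locdim1} are essentially identical to the paper's: the same set of points whose $F_\beta$-orbit stays $\delta$-away from the overlap $C_\lam$, the same $n$-fold iteration of the self-similarity relation on balls $B(x,\delta\lam^n)$ for the upper bound $\nula(B(x,r))\le 2^{-n}$, and the same cylinder-inside-ball observation for the universal lower bound $\nula(B(x,r))\gtrsim r^{s(\lam)}$ (the paper packages the two-sided estimate as Lemma \ref{lem:meas-when-unique-exp} and Corollary \ref{cor-cor3}). That part is correct and complete.

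The genuine gap is in parts (ii) and (iii), and you name it yourself: you must produce subsets of the \emph{uniformly separated} set $\wtil{\Ak}_{\lam,\delta}$ of positive (resp.\ near-full) dimension, and you defer this to an unproven ``Markov approximation of the $\beta$-shift'' whose dimension approaches $\dim_H\Ak_\lam$, flagged as ``the main difficulty.'' As written this is a plan, not a proof. Two remarks. First, you have made the problem harder than necessary: the theorem does not require the separated part to capture the full dimension of $\Ak_\lam$ as $\delta\searrow 0$, only that $\dim_H\wtil{\Ak}_\lam>0$ for $\lam<\beta_c^{-1}$ and that it tends to $1$ as $\lam\searrow\frac12$. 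Second, the paper closes exactly this gap with a short monotonicity lemma (Lemma \ref{lem:nested-unique-exp}): for $\frac12<\lam_1<\lam_2<g$ one has $\Uk_{\lam_2}\subset\wtil{\Uk}_{\lam_1}$ with an explicit separation constant $\delta(\lam_1,\lam_2)$ --- i.e.\ every sequence that is univoque for the \emph{larger} parameter is automatically uniformly separated for the smaller one. Combined with explicit positive-dimensional sub-SFTs of $\Uk_{\lam_2}$ (free concatenations of $1(10)^{k+1}$ and $0(01)^{k+1}$ for (ii); sequences $10\jjj_1 10\jjj_2\ldots$ with $\jjj_i\in\{0,1\}^{k-2}$ arbitrary, giving dimension $\ge\frac{k-2}{k}s(\lam)$, for (iii)) and the bi-Lipschitz property of $\Pi_\lam$ on $\Uk_\lam$ (Lemma \ref{lem:bi-Lipschitz}), this yields (ii) and (iii) with no need to control $\Uk_\lam$ near the Komornik--Loreti threshold. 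Finally, for (i) your route differs from the paper's: you rely on the assertion that for $\lam\in(\beta_c^{-1},g)$ \emph{every} univoque sequence is eventually periodic (hence uniformly separated), whereas the paper only needs countability of $\Ak_{\lam}$ from \cite{GlendinningSidorov01} together with infiniteness of $\Ak_{\lam_2}\subset\wtil{\Ak}_{\lam}$ for some $\lam_2\in(\lam,g)$ via the same nesting lemma. Your version is believable but leans on a stronger structural citation that you should verify or prove.
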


\begin{remark}
The above fails beyond the golden ratio: Feng and Sidorov \cite[Corollary 1.6]{FengSidorov10} have shown that $\overline{d}(\nula,x)<s(\lam)$ for all $\lam\in (g,1)$ and $x$ in the interior of $I_\lam$.
\end{remark}
\begin{remark}
It is known that for all $\lambda\in\left(\frac{1}{2},1\right)$ there exists a constant $0<\delta(\lambda)=\dim_H\nu_{\lambda}\leq 1$ such that $d(\nu_{\lambda},x)=\delta(\lambda)$ for $\nu_{\lambda}$ almost all $x$. This was stated in \cite{Led} with an outline of a proof, and formally proved in \cite{FengHu09}. Thus by combining this result with Theorem \ref{thm:class1}, we can deduce that for all $\lambda\in \left(\frac{1}{2},\beta_c^{-1}\right)$ there are at least two distinct values of $\alpha$ with $\dim_H\Delta_{\nu_{\lambda}}(\alpha)>0$.
\end{remark}
The definition of the sets $\wtil{\Ak}_\lam$ is given below, in \S \ref{subsec:expansions}.


\subsection{Results: the biased case $p \ne \frac{1}{2}$.} \label{subsec:results-biased}

Theorem \ref{thm:class1} provides information about a single exceptional value of the local dimension: the similarity dimension $s(\lam)$. Recall that $\nula^p$ is a non-linear projection (under the coding map) of $\eta^p$, the $p$-Bernoulli measure on the symbolic space $\{0,1\}^\N$. This measure is multifractal for $p\neq \frac{1}{2}$, which suggests that, in the asymmetric case, $\nula^p$ may inherit a degree of multifractality. The results of this section show that this is, indeed, the case.

We start with an extension of Theorem \ref{thm:class1}. We need another number here, $\beta_1$, which is the positive solution of the equation $ 1 = x^{-1} + \sum_{n=1}^\infty x^{-2n}$, we have $\beta_1^{-1} = 0.554958\ldots$
Assume that $p \in (0, \half)$ without loss of generality.

\begin{theorem} \label{thm:bias1}
Let $\wtil{\Ak}_\lam$ be the same sets
as in Theorem~\ref{thm:class1}. Then

{\bf (i)} For all $\lam \in (\beta_1^{-1},g)$,
\begin{equation} \label{eq:locdim2}
d(\nula^p,x) =\frac{\log p + \log(1-p)}{2\log\lam} \ \ \mbox{for all}\ \  x\in \wtil{\Ak}_\lam;
\end{equation}

{\bf (ii)} For all $\lam \in (\frac{1}{2}, \beta_1^{-1})$ there exists $r_\lam\in (0,\half)$ such that
\begin{eqnarray*}
& & \dim_H\{x\in \wtil{\Ak}_\lam:\ d(\nula^p,x) = \gam\} > 0\\[1.2ex] & & \mbox{for all}\ \ \gam \in
\left[\frac{r_\lam \log p + (1-r_\lam) \log(1-p)}{\log\lam}, \frac{(1-r_\lam) \log p + r_\lam \log(1-p)}{\log\lam} \right].
\end{eqnarray*}
Moreover, $r_\lam\searrow 0$ as $\lam\searrow \half$;

{\bf (iii)} For all $\lam \in (\frac{1}{2}, \beta_1^{-1})$,
\begin{equation} \label{eq:locdim4}
\dim_H\{x\in \wtil{\Ak}_\lam:\ \ \underline{d}(\nula^p,x) < \ov{d}(\nula^p,x)\} > 0.
\end{equation}
\end{theorem}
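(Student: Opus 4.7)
The plan is to refine the mass estimates behind Theorem~\ref{thm:class1} so that they track not only the total number of admissible $n$-step codings of a point $x \in \wtil{\Ak}_\lam$, but also the distribution of $0$'s and $1$'s among them. For $x \in \wtil{\Ak}_\lam$ and $n \in \N$, set
\[
\Sigma_n(x) = \bigl\{\iii = (i_1,\ldots,i_n) \in \{0,1\}^n : x \in S_{i_1}\!\circ\cdots\circ S_{i_n}(I_\lam)\bigr\},
\]
and write $|\iii|_1 = \#\{k \le n : i_k = 1\}$. Iterating the self-similarity relation \eqref{eq:ss1} gives the standard two-sided estimate
\[
\nula^p(B(x, C\lam^n)) \asymp \sum_{\iii \in \Sigma_n(x)} p^{\,n - |\iii|_1}(1-p)^{|\iii|_1}.
\]
The analysis underlying Theorem~\ref{thm:class1} should in fact yield $\#\Sigma_n(x) = 2^{o(n)}$ for $x \in \wtil{\Ak}_\lam$, which is exactly what forces $d(\nula, x) = s(\lam)$ in the symmetric case. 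For the biased problem the extra ingredient is control of the asymptotic behaviour of $|\iii|_1/n$ as $\iii$ ranges over $\Sigma_n(x)$.

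For part~(i), I expect the structure of $\wtil{\Ak}_\lam$ in the range $\lam \in (\beta_1^{-1}, g)$ to force every $\iii \in \Sigma_n(x)$ to satisfy $|\iii|_1 = n/2 + o(n)$, i.e.\ admissibility constraints forbid any systematic excess of $1$'s over $0$'s or vice versa. The threshold $\beta_1^{-1}$ should appear as the critical parameter at which this rigidity breaks, and I expect it to emerge from the defining equation $1 = x^{-1} + \sum_{n\ge 1} x^{-2n}$ by identifying the competition between admissible $\beta$-blocks of the form $(10)^n$ versus $(01)^n$ and their unbalanced perturbations. Granting the rigidity, the sum above factorises as $\#\Sigma_n(x) \cdot [p(1-p)]^{n/2 + o(n)}$, and taking $-\log(\cdot)/(n|\log\lam|)$ gives
\[
d(\nula^p, x) = \frac{\log p + \log(1-p)}{2\log\lam}.
\]

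For part~(ii), in the range $\lam \in (\tfrac{1}{2}, \beta_1^{-1})$ this rigidity fails, and $\wtil{\Ak}_\lam$ contains points whose admissible codings can realise any asymptotic frequency of $1$'s in an interval $[r_\lam, 1-r_\lam]$, with $r_\lam \searrow 0$ as $\lam \searrow \tfrac{1}{2}$, mirroring the growth $\dimh \wtil{\Ak}_\lam \to 1$ from Theorem~\ref{thm:class1}(iii). For each such $r$ I would carve out a Moran-type Cantor subset $\wtil{\Ak}_\lam(r) \subset \wtil{\Ak}_\lam$ consisting of points all of whose $\iii \in \Sigma_n(x)$ satisfy $|\iii|_1/n \to r$; a Billingsley-type entropy estimate then gives $\dimh \wtil{\Ak}_\lam(r) > 0$, and the same factorisation as in part~(i) yields
\[
d(\nula^p, x) = \frac{(1-r)\log p + r\log(1-p)}{\log\lam}
\]
on $\wtil{\Ak}_\lam(r)$. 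As $r$ runs over $[r_\lam, 1-r_\lam]$, this expression traces the stated interval of $\gam$. For part~(iii), I use a variant of the same construction in which the digit frequency $|\iii|_1/n$ is forced to oscillate between two distinct admissible values $r_1 \ne r_2 \in (r_\lam, 1-r_\lam)$ along two rapidly-growing subsequences of $n$. Standard alternating-block Cantor constructions yield such $x$ in positive Hausdorff dimension, and along the two subsequences $\log\nula^p(B(x,\lam^n))/(n\log\lam)$ clusters at two different values, giving $\underline{d}(\nula^p, x) < \ov{d}(\nula^p, x)$.

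The chief obstacle is the sharp combinatorial description of $\Sigma_n(x)$ for $x \in \wtil{\Ak}_\lam$: pinpointing $\beta_1^{-1}$ as the exact threshold below which unbalanced digit frequencies become admissible, and identifying the range $[r_\lam, 1-r_\lam]$ of achievable frequencies below that threshold. This is a refinement of the Komornik--Loreti and Glendinning--Sidorov theory of admissible and unique $\beta$-expansions, passing from existence of exceptional expansions to uniform control of their digit statistics along every level $n$; once this is in place, the mass estimates themselves are routine consequences of the self-similarity relation.
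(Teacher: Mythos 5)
Your architecture coincides with the paper's (a two-sided mass estimate for balls in terms of the digit statistics of the coding, rigidity of the digit frequency at $\tfrac12$ above $\beta_1^{-1}$, and concatenation/Moran constructions below it), but as written there are two genuine gaps, one of which you yourself flag as ``the chief obstacle''. First, the estimate $\nula^p(B(x,C\lam^n))\asymp\sum_{\iii\in\Sigma_n(x)}p^{n-|\iii|_1}(1-p)^{|\iii|_1}$ is \emph{not} a standard consequence of iterating \eqref{eq:ss1} for an overlapping system: the lower bound is easy, but the upper bound fails for general $x$ because the ball can meet exponentially many level-$n$ cylinders that do not contain $x$. It holds on $\wtil{\Ak}_{\lam,\delta}$ precisely because of the defining condition $\dist(\{F_\beta^n x\},C_\lam)\ge\delta$: taking radius $\delta\lam^n$, at each step the ball misses the overlap region $C_\lam$, so exactly one branch of \eqref{eq:ss1} contributes and the ``sum'' is a single term (indeed $\card\Sigma_n(x)=1$, not merely $2^{o(n)}$, since $\wtil{\Ak}_\lam\subset\Ak_\lam$ consists of points with a \emph{unique} expansion). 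This is the content of the paper's Lemma~\ref{lem:meas-when-unique-exp}, and without it your ``$\asymp$'' is unjustified.

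Second, the combinatorial input you defer is exactly where the theorem lives, and it does not require a new refinement of the theory: it follows from the lexicographic characterization of $\Uk_\lam$ (Lemma~\ref{lem2}). For $\lam=\beta_1^{-1}$ the greedy expansion of $1$ is $1(10)^\infty$, which forces every $\iii\in\Uk_\lam$ to be a concatenation of the balanced words $1(10)^k0$ and their flips, whence $|\ell_0(\iii,n)-n/2|\le 1$ and $\freq_0(\iii)=\tfrac12$; the case of general $\lam\in(\beta_1^{-1},g)$ then needs the monotonicity $\Uk_{\lam_2}\subset\wtil{\Uk}_{\lam_1}$ for $\lam_1<\lam_2$ (Lemma~\ref{lem:nested-unique-exp}), which you do not address. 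For $\lam<\beta_1^{-1}$ the expansion of $1$ begins $1(10)^k11$, so $\Uk_\lam$ contains the full shift on the two unbalanced words $1(10)^{k+1}$ and $0(01)^{k+1}$; this gives your frequency interval and the oscillating-frequency set of positive dimension for (ii) and (iii), but only an interval of frequencies around $\tfrac12$ of length $\frac{1}{2k+3}$ --- to get $r_\lam\searrow 0$ as $\lam\searrow\half$ one needs the separate multinacci construction with the words $0^{k-1}1$ and $01^{k-1}$. Finally, transferring positive dimension from the symbolic level to $\R$ uses that $\Pi_\lam$ is bi-Lipschitz on $\Uk_\lam$ (Lemma~\ref{lem:bi-Lipschitz}); your direct Moran construction in $\R$ would serve the same purpose but should be said explicitly.
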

The proof will in fact yield quantitative estimates on $r_\lam$ and the Hausdorff dimensions of the sets involved. The second part of the theorem shows that, for $\lam  \in (\frac{1}{2}, \beta_1^{-1})$ and $p\neq \frac12$, the spectrum $\dim_H(\Delta_{\nula^p}(\alpha))$ is strictly positive for $\alpha$ in some interval which extends (and possibly starts) beyond $1$.

Our next theorem shows that also many \emph{small} local dimensions ($\alpha<1$) arise. Unlike Theorem \ref{thm:bias1}, this is an almost-everywhere result: it holds not only for typical $\lam$, but also for typical $\alpha$. Moreover, we are only able to obtain results for $\lam$ in a so-called \textit{interval of transversality}, i.e. for $\lam\in (0,\lam_*)$, where $\lam_*$ has the property that Proposition \ref{prop:transversality} below holds. In particular, we can take $\lam_*=0.66847$. See e.g. \cite{Solomyak04, ShmerkinSolomyak06} for a discussion of transversality in this context and how to find intervals of transversality. Notice that $\lam_* > g$.

Nevertheless, we are able to see that biased Bernoulli convolutions can have a rich spectrum of small dimensions, even in the parameter region on which they are typically absolutely continuous. We underline that it is often harder to obtain estimates for $\dim_H(\Delta_\mu(\alpha))$ when $\alpha$ is smaller than the Hausdorff dimension of $\mu$; see e.g. \cite{FengLauWang05, Feng07} for some instances of this. Again assume, without loss of generality, that $p\in(0,\frac{1}{2})$ is fixed.

\begin{theorem} \label{thm:small-dim-BCs}
For almost every $\lam\in(\frac{1}{2},\min(\lam_*,1-p))$, the following holds: the set $\Delta_{\nula^p}(\alpha)$ has positive Hausdorff dimension for almost every $\alpha$ in
\[
J(p,\lam) = \left[\frac{\log(1-p)}{\log\lam},1 \right].
\]
More precisely,
\[
\dim_H(\Delta_{\nula^p}(\alpha)) \ge \frac{h(q)}{|\log\lam|} \quad \text{for almost every } \alpha\in J(p,\lam),
\]
where $\alpha$ and $q$ are related by
\begin{equation} \label{eq:relation-alpha-q}
\log\lam\cdot\alpha=q\log p+(1-q)\log(1-p).
\end{equation}
\end{theorem}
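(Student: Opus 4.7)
The overall strategy is to use, for each $\alpha \in J(p,\lam)$, the biased Bernoulli convolution $\nu_\lam^q = \eta^q \circ \Pi_\lam^{-1}$ as a witness measure on the level set $\Delta_{\nu_\lam^p}(\alpha)$, where $q$ is determined by $\alpha$ via \eqref{eq:relation-alpha-q}. For the range $q \in [0, q^*(\lam)]$ corresponding to $\alpha \le 1$ we have $s_q(\lam) \le 1$, and a standard transversality argument (e.g., via the Frostman energy) yields $\dim_H \nu_\lam^q = s_q(\lam) = h(q)/|\log\lam|$ for Lebesgue-a.e.\ $\lam$ in the transversality region. Combined with the mass distribution principle, it then suffices to show that for a.e.\ $\lam$, the local dimension $d(\nu_\lam^p, x) = \alpha$ at $\nu_\lam^q$-a.e.\ point $x$.

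The upper bound $\ov{d}(\nu_\lam^p, x) \le \alpha$ is routine: for $\eta^q$-a.e.\ $\mathbf{i}$, the strong law of large numbers gives
\[
\tfrac{1}{n}\log\eta^p([i_1,\ldots,i_n]) \to q\log p + (1-q)\log(1-p) = \alpha\log\lam,
\]
while the geometric fact $\Pi_\lam([i_1,\ldots,i_n]) \subset B(\Pi_\lam(\mathbf{i}), C\lam^n)$ forces $\nu_\lam^p(B(\Pi_\lam(\mathbf{i}), C\lam^n)) \ge \eta^p([i_1,\ldots,i_n]) = \lam^{n(\alpha+o(1))}$.

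The core technical challenge is the matching lower bound $\underline{d}(\nu_\lam^p, x) \ge \alpha$. The natural first attempt is an averaged Markov-plus-Borel-Cantelli argument: for $\eps>0$, one bounds
\[
\int_U \nu_\lam^q\{x : \nu_\lam^p(B(x,\lam^n)) > \lam^{n(\alpha-\eps)}\}\, d\lam \le \lam_0^{-n(\alpha-\eps)} \iint \Leb\{\lam\in U : |\Pi_\lam\mathbf{i} - \Pi_\lam\mathbf{j}| < \lam^n\}\, d\eta^p(\mathbf{i})\, d\eta^q(\mathbf{j}),
\]
and applies the transversality Proposition~\ref{prop:transversality} after conditioning on the first disagreement index $m(\mathbf{i},\mathbf{j})$, which is geometric under $\eta^p\otimes\eta^q$ with parameter $1-\rho$, $\rho = pq + (1-p)(1-q)$. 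However, AM--GM reveals $\rho \ge p^q(1-p)^{1-q} = \lam^\alpha$ throughout $J(p,\lam)$, with equality only at the endpoints, so this first-moment estimate alone is insufficient. The remedy I would pursue is a higher-moment version of the argument, estimating $\int \nu_\lam^p(B(x,\lam^n))^\gamma\, d\nu_\lam^q(x)$ for $\gamma \ge 2$: after expansion this introduces joint-coincidence probabilities among $\gamma$ independent cylinders sampled from $\eta^p$, whose combined transversal decay (using $\gamma$-fold variants of Proposition~\ref{prop:transversality}) can be made to beat the weight $\lam^{-n\gamma(\alpha-\eps)}$ for all but a measure-zero set of $\alpha$. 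The constraint $\lam<1-p$ prevents degeneration at the endpoint $\alpha=\log(1-p)/\log\lam$ (corresponding to $q=0$, where $\rho=1-p$ and the first-moment argument collapses), and a final Fubini argument delivers the conclusion for a.e.\ $(\lam,\alpha)$; the delicate balancing between these multi-coincidence probabilities and the geometric scale $\lam^\alpha$ is where the main technical work of the proof will lie.
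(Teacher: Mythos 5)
Your overall architecture (using $\nu_\lam^q$ as a witness measure on the level set, the upper bound via the strong law plus the cylinder-in-ball inclusion, $\dim_H\nu_\lam^q=h(q)/|\log\lam|$ by transversality, and a final Fubini over $(\lam,q)$) matches the paper, and your diagnosis that the naive first-moment estimate fails because $\rho:=pq+(1-p)(1-q)\ge p^q(1-p)^{1-q}=\lam^{\alpha}$ is exactly right. However, the remedy you propose --- higher moments $\int\nu_\lam^p(B(x,\lam^n))^\gamma\,d\nu_\lam^q(x)$ combined with $\gamma$-fold transversality --- cannot work, and this is precisely where the real content of the proof lies. By Jensen's inequality, for every fixed $\lam$,
\[
\int \nu_\lam^p(B(x,\lam^n))^\gamma\,d\nu_\lam^q(x)\;\ge\;\Bigl(\int \nu_\lam^p(B(x,\lam^n))\,d\nu_\lam^q(x)\Bigr)^{\gamma}\;\ge\;\rho^{(n+C_\lam)\gamma},
\]
since the first moment is at least $(\eta^p\times\eta^q)\{|\iii\wedge\jjj|\ge n+C_\lam\}=\rho^{n+C_\lam}$ for a suitable constant $C_\lam$. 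Hence the Markov bound at threshold $\lam^{n(\alpha-\eps)}$ is at least of order $\bigl(\rho\,\lam^{-(\alpha-\eps)}\bigr)^{n\gamma}$, which diverges for all sufficiently small $\eps>0$ and every $\gamma\ge 1$ because $\rho>\lam^{\alpha}$ strictly whenever $p\neq\frac12$ and $q\in(0,1)$; increasing $\gamma$ only makes matters worse, and averaging over $\lam$ does not help since the Jensen bound holds pointwise in $\lam$. The obstruction is structural: every moment of $\nu_\lam^p(B(x,\lam^n))$ with respect to the \emph{full} measure $\nu_\lam^q$ is dominated by the $\eta^q$-small set of sequences $\jjj$ whose prefixes are $p$-typical rather than $q$-typical, and no moment method over the full measure can see past them.

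The paper's fix is to excise exactly that small set before estimating: take an Egorov set $\Sigma$ with $\eta^q(\Sigma)>1-\delta$ on which $-\frac1n\log\eta^p[\jjj|n]\to H_p^q:=-q\log p-(1-q)\log(1-p)$ uniformly, and bound the mutual $s$-energy of $\nu_\lam^p$ and the restriction of $\nu_\lam^q$ to $\Pi_\lam(\Sigma)$, integrated over $\lam$. The transversality estimate (Lemma \ref{lem:application-trans}) reduces this to $\sum_n\lam_0^{-sn}\sum_{\iii\in\{0,1\}^n}\eta^p([\iii])\,\eta^q(\Sigma\cap[\iii])$, and the key one-line observation is
\[
\sum_{\iii\in\{0,1\}^n}\eta^p([\iii])\,\eta^q(\Sigma\cap[\iii])\;\le\;\max_{\iii\in\{0,1\}^n:\,[\iii]\cap\Sigma\neq\es}\eta^p([\iii])\;\le\;C'e^{-n(H_p^q-\eps)},
\]
which makes the energy finite for $s$ up to $\min(H_p^q/|\log\lam_0|,1)$ minus $O(\eps)$, and hence gives $\underline{d}(\nu_\lam^p,x)\ge\min(H_p^q/|\log\lam|,1)$ for $\nu_\lam^q$-a.e.\ $x$, for \emph{every} $q$ and a.e.\ $\lam$. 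Note also that the ``almost every $\alpha$'' in the statement comes solely from the final Fubini step, not from discarding bad $\alpha$'s in a failing moment estimate. Without the restriction to $\Sigma$ (or an equivalent device), your argument has a genuine gap at its central step.
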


\begin{remark}
Notice that, for any $p\in (\frac{1}{3},\frac{1}{2})$, there is an open set of $\lam$ for which $\nu_\lam^p$ is (typically) absolutely continuous and the interval $J(p,\lam)$ in the above theorem is nontrivial. Indeed, by Theorem \ref{thm:absolute-continuity-BCs}, $\nula^p$ is absolutely continuous for a.e. $\lam\in (p^p(1-p)^{1-p},1)$, while $J(p,\lam)$ is nontrivial if $\lam<\min(\lambda_*,1-p)$. Since $p^p(1-p)^{1-p}<1-p$ precisely for $p<\frac{1}{2}$, we see that, even for $p$ very close to $\frac{1}{2}$, there are many values of $\lam$ (more precisely, almost every value in some interval) for which $\nula^p$ is absolutely continuous, yet has a positive measure set of local dimensions smaller than $1$.
\end{remark}

Our last result is of a slightly different kind: it concerns the behavior of the multifractal spectrum as $\lam\to \frac{1}{2}$. We note that, a priori, there is no reason why, for a fixed $x$, $d(\nula^p,x)$ should be continuous in $\lam$. One may ask whether, in spite of this, the \textit{global} spectrum $\dim_H(\Delta_{\nula^p}(\alpha))$ behaves continuously with the parameters $\lambda$ and $p$. In general this appears unlikely, since e.g. reciprocals of Pisot numbers are exceptional. However, the next theorem shows that, as the size of the overlaps tends to $0$, there is continuity:

\begin{theorem}\label{thm:continuity-mf-spectrum}
Let
\[
f_{\lambda,p}(\alpha) = f_{\nula^p}(\alpha) = \dim_H\left(\Delta_{\nula^p}(\alpha)\right).
\]
Then, for all $p_0\in (0,1)$ and all $\alpha\ge 0$,
\[
\lim_{\lam\to \frac{1}{2}, p\to p_0} f_{\lambda,p}(\alpha) = f_{1/2,p_0}(\alpha).
\]
\end{theorem}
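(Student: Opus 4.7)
My plan is to sandwich $f_{\lam,p}(\alpha)$ between asymptotic lower and upper bounds both tending to $f_{1/2,p_0}(\alpha)$, after computing the latter directly.

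\textbf{Base case.} At $\lam=1/2$ the map $\Pi_{1/2}\colon\{0,1\}^{\N}\to[0,1]$ is the binary expansion, bijective off the countable set of dyadic rationals. Hence $\nu_{1/2}^{p_0}$ is the push-forward of the Bernoulli measure $\eta^{p_0}$ under a map that is bi-Lipschitz off a null set, and the classical Besicovitch--Eggleston theorem yields
\[
f_{1/2,p_0}(\alpha)=\frac{h(q(\alpha))}{\log 2},
\]
where $q(\alpha)\in[0,1]$ solves $\alpha\log(1/2)=q\log p_0+(1-q)\log(1-p_0)$, with $\Delta_{\nu_{1/2}^{p_0}}(\alpha)=\es$ outside this range of $\alpha$.

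\textbf{Lower bound.} For each fixed $q\in(0,1)$ I would refine the construction behind Theorem~\ref{thm:bias1} by restricting $\wtil{\Ak}_\lam$ to the sub-Cantor set of points whose unique $\lam$-coding has asymptotic $0$-frequency equal to $q$. Uniqueness of coding forces the local dimension there to equal $\alpha_{\lam,p}(q):=(q\log p+(1-q)\log(1-p))/\log\lam$, while a standard Bernoulli-type mass distribution argument yields Hausdorff dimension $h(q)/|\log\lam|$. Both quantities depend continuously on $(\lam,p)$, so letting $(\lam,p)\to(1/2,p_0)$ gives $\liminf f_{\lam,p}(\alpha)\ge f_{1/2,p_0}(\alpha)$ for every $\alpha$ in the interior of the base-2 range.

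\textbf{Upper bound and main obstacle.} I would invoke the universal bound $f_\mu(\alpha)\le\tau_\mu^*(\alpha)$ (Legendre transform of the $L^q$-spectrum) and prove that
\[
\tau_{\nu_\lam^p}(q)\longrightarrow \tau_{\nu_{1/2}^{p_0}}(q)=-\frac{\log(p_0^q+(1-p_0)^q)}{\log 2}
\]
as $(\lam,p)\to(1/2,p_0)$, uniformly on compact subsets of $q$. Concavity of the $\tau$-spectra then transfers this to convergence of the Legendre transforms (which for the limit is strictly concave and differentiable), yielding $\limsup f_{\lam,p}(\alpha)\le f_{1/2,p_0}(\alpha)$. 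For $\lam\le 1/2$ the (essentially) strong separation condition gives the explicit closed form $\tau_{\lam,p}(q)=\log(p^q+(1-p)^q)/\log\lam$ and the convergence is transparent. The main obstacle is the regime $\lam\in(1/2,1/2+\eps)$, where overlaps obstruct any closed form for $\tau_{\lam,p}(q)$. The strategy is to bound $\sum_i\nu_\lam^p(B(x_i,\lam^n))^q$ directly, using the fact that at most $O((2\lam)^n)$ level-$n$ cylinders hit a ball of radius $\lam^n$, and to argue that the contribution of overlapping cylinders to the $q$-moment is dominated by the non-overlapping part in the scaling limit $\lam\searrow 1/2$. Making this counting argument quantitative and uniform in $(\lam,p)$ is the crux of the proof.
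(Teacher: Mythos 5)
Your lower bound follows the same basic idea as the paper's (pass to a sub-system of unique, overlap-free codings on which local dimension is governed by digit frequencies), but one claim in it is an overstatement: for fixed $\lam>\frac12$ the frequency-$q$ subset of $\wtil{\Ak}_\lam$ does \emph{not} have Hausdorff dimension $h(q)/|\log\lam|$ in general, because $\Uk_\lam$ is a proper subshift of $\{0,1\}^\N$ and the frequency-$q$ sequences it contains form a strictly smaller set than all frequency-$q$ sequences. What is true, and what the paper actually proves (via the strongly separated IFS on the alphabet $\Sigma_m$ of non-constant words of length $m=[k/2]$, together with the auxiliary self-similar measure with weights $p_\iii^{\eta_k}$, $\eta_k\to 1$), is that this dimension converges to the Besicovitch--Eggleston value as $\lam\searrow\frac12$. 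This is a fixable imprecision, and your conclusion $\liminf f_{\lam,p}(\alpha)\ge f_{1/2,p_0}(\alpha)$ survives.

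The genuine gap is in the upper bound. You reduce the problem to showing $\tau_{\nu_\lam^p}(q)\to\tau_{\nu_{1/2}^{p_0}}(q)$ and then state that making the counting argument ``quantitative and uniform in $(\lam,p)$ is the crux of the proof'' --- but that crux is precisely what is missing, and the one quantitative ingredient you do name is unsubstantiated. The claim that at most $O((2\lam)^n)$ level-$n$ cylinders meet a ball of radius $\lam^n$ bounds the \emph{maximal} multiplicity of the cylinder covering by its \emph{average} multiplicity, which does not follow from anything you state; the paper never proves such a bound. What it proves instead (Proposition \ref{prop:unif-bound-near-ahalf}) is that for $\lam\in\Lambda_k$ at most $2^n$ cylinders of level $nk$ meet a ball of radius $\lam^{(n-1)k}\eta$, by an induction in blocks of length $k$ which exploits that $10^{k-1}$ and $01^{k-1}$ are the only length-$k$ words whose cylinder intervals can overlap $S_0(I_\lam)$ and $S_1(I_\lam)$ respectively. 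With that multiplicity bound in hand, the paper also bypasses the $L^q$-spectrum entirely: it dominates the fine spectrum by the coarse spectrum $\wtil{f}_\mu$ and compares the counting functions $N^{\pm}_{\nula^p}(\alpha;r_n)$ directly with $N^{\pm}_{\nu_{1/2}^p}(\cdot\,;2^{-nk})$, the factor $2^n$ contributing exactly a $+\frac1k$ shift in the exponent that disappears as $k\to\infty$. Your route through $\tau$ and its Legendre transform would in addition require justifying that pointwise convergence of the concave $L^q$-spectra forces convergence of the Legendre transforms at every $\alpha$, including the endpoints of the spectrum; this is doable but is an extra layer of argument. As written, your upper bound is a plan rather than a proof, and the plan's key counting step is both unproved and stated in a form that is likely false as a uniform bound.
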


Although we do not make them explicit, the proof of the theorem provides computable lower and upper bounds: see Figure \ref{fig:mfbounds}.
\begin{figure}
\includegraphics[width=0.9\textwidth]{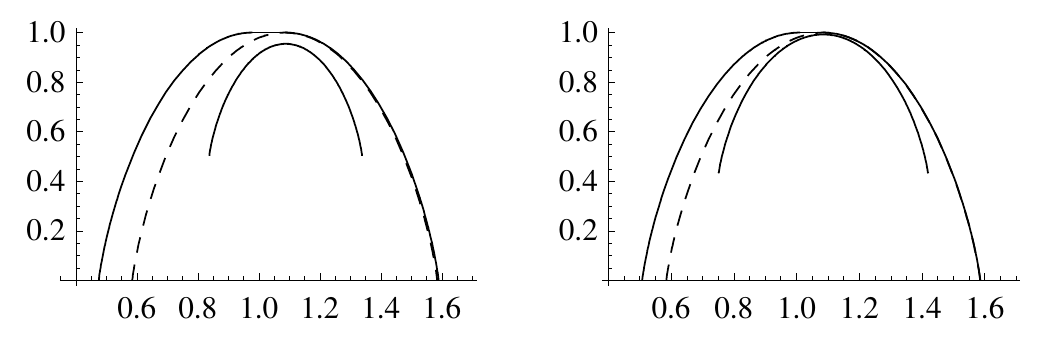}
\caption{The graph shows explicit upper and lower bounds for $\alpha\to f_{\lam,1/3}(\alpha)$, for $\lam=0.501$ (left) and $\lam=0.5001$ (right). The dashed curve is $f_{1/2,1/3}(\alpha)$ (which can be computed explicitly) in both cases. The upper and lower bounds are obtained from the proof of Theorem \ref{thm:continuity-mf-spectrum}.} \label{fig:mfbounds}
\end{figure}

\subsection{Notation}

The following table summarizes the main notation to be used throughout the paper.

\medskip

\begin{center}
\begin{tabular}{|c|c|}
\hline
Object & Notation \\
\hline
Cardinality of a set $A$ & $\card A$ \\
Length of an interval $I$ & $|I|$ \\
Finite or infinite string of $0$s and $1$s & $\iii,\jjj,\kkk$ \\
$n$-th element of $\iii,\jjj,\kkk$ & $i_n,j_n,k_n$\\
Concatenation of $\iii$ and $\jjj$ & $\iii\jjj$ \\
Empty word & $\varnothing$ \\
Length of a finite word $\iii$ & $|\iii|$ \\
Restriction of $\iii$ to its first $n$ elements & $\iii|n$ \\
Longest common initial subword of $\iii$ and $\jjj$ & $\iii\wedge\jjj$\\
All infinite words starting with a finite string $\iii$ & $[\iii]$ \\
String of $k$ consecutive $0$'s (resp. $1$'s) & $0^k$ (resp. $1^k$)\\
Left shift operator on $\{0,1\}^\N$ & $\sigma$\\
\hline
\end{tabular}
\end{center}
\subsection*{Acknowledgement} We would like to thank Mark Pollicott and Nikita Sidorov for useful discussions.
\medskip


\section{Proof of Theorems~\ref{thm:class1} and \ref{thm:bias1}} \label{sec:BCs-proofs}

\subsection{Expansions in base $\lam$.} \label{subsec:expansions}
Recall that $I_\lam = \supp(\nula) = [0,\frac{\lam}{1-\lam}]$. For $x\in I_\lam$ define
$$
\Ek_\lam(x) = \Bigl\{(a_1,a_2,\ldots)\in \{0,1\}^\Nat:\ x = \sum_{n=1}^\infty
a_n \lam^{n}\Bigr\}
$$
to be the set of all expansions of $x$ in base $\lam$ with digits 0 and 1.
Let
$$\Ak_\lam:=\{x\in I_\lam:\ \card\Ek_\lam(x)=1\}$$
be the set of $x$ having a unique expansion in base $\lam$.
Of course, 0 and $\frac{\lam}{1-\lam}$ always have a unique
expansion.

The following results are known about $\Ak_\lam$:

\begin{itemize}

\item For all $\lam \in [g,1)$ and all $x\in \Int(I_\lam)$, the set
$\Ek_\lam(x)$ is uncountable, of positive Hausdorff dimension.
On the other hand, for all $\lam \in (\half,g)$, the set $\Ak_\lam$ is
infinite \cite{ErdosJooKomornik90}.

\item For all $\lam\in (\beta_c^{-1},g)$, the set $\Ak_\lam$ is countably infinite;
for $\lam=\beta_c^{-1}$, the set $\Ak_\lam$ is uncountable of zero Hausdorff
dimension; for all $\lam \in (\half,\beta_c^{-1})$ the set $\Ak_\lam$
has positive Hausdorff dimension \cite{GlendinningSidorov01}.

\end{itemize}
We need some basic facts about $\beta$-expansions (see e.g.\ \cite{DajaniKraaikamp02}),
as well as some more recent results \cite{GlendinningSidorov01}. Let $\beta > 1$.
We use the notation $x \sim (a_1 a_2 a_3\ldots)_\beta$
to indicate $x = \sum_{n=1}^\infty a_n \beta^{-n}$. We will have
$\beta = \lam^{-1} \in (1,2)$, so the digits $a_n$ will always be in $\{0,1\}$. Given $x\in [0,1]$, the {\em greedy expansion} of $x$ in base $\beta = \lam^{-1}$ is
defined as the greatest sequence in $\Ek_\lam(x)$ in the lexicographic order $\prec$ on $\{0,1\}^\N$.
Alternatively, the greedy expansion is given by the symbolic dynamics of the ``greedy'' $\beta$-transformation
$$
G_\beta(x) = \left\{ \begin{array}{ll} \beta x, & x\in [0, \lam) \\ \beta  x -1, & x\in [\lam,\frac{\lam}{1-\lam}]\end{array} \right..
$$
Namely, the digit $a_n$ of the greedy expansion is $0$ if $G_\beta^{n-1}(x) \in [0,\lam)$, and 1 otherwise.
The {\em lazy expansion} of $x$ in base $\beta = \lam^{-1}$ is the smallest sequence in $\Ek_\lam(x)$ in the lexicographic order. Alternatively, it is given by the
symbolic dynamics of the ``lazy'' $\beta$-transformation
$$
L_\beta(x) = \left\{ \begin{array}{ll} \beta x, & x\in [0, \frac{\lam^2}{1-\lam}] \\ \beta  x -1, & x\in (\frac{\lam^2}{1-\lam},\frac{\lam}{1-\lam}]
\end{array} \right.,
$$
that is, the $n$-th digit is $0$ if $L_\beta^{n-1}(x) \in [0,\frac{\lam^2}{1-\lam}]$, and 1 otherwise.
Note that the functions $G_\beta$ and $L_\beta$ agree on $I_\lam \setminus C_\lam$, where $C_\lam=[\lam, \frac{\lam^2}{1-\lam}]$; see Figure \ref{fig:greedylazy}. A number $x\in [0,1]$ has a unique expansion in base $\beta$ if
and only if its greedy expansion coincides with its lazy expansion.


Let $F_\beta:\, I_\lam \setminus C_\lam \to I_\lam$ be the function equal to $G_\beta$ and $L_\beta$ on
the set of agreement. In other words,
$$
F_\beta(x) = \left\{\begin{array}{ll} \beta x, & x\in [0,\lam) \\
          \beta x-1, & x\in (\frac{\lam^2}{1-\lam},\frac{\lam}{1-\lam}] \end{array}\right..
$$

Note that there is a ``gap'' $C_\lam$ in the domain of $F_\beta$, so not every orbit is well-defined. In fact, the infinite orbit
$\{F_\beta^n x\}_{n\ge 0}$ exists if and only if $x$ has a unique expansion in base $\lam$.

\begin{figure}[h]
\centering
\begin{picture}(200,125)
\put(0,25){\vector(1,0){150}}
\put(25,0){\vector(0,1){125}}
\thicklines
\put(25,25){\line(2,3){25}}
\put(50,62.5){\line(2,3){25}}
\put(50,25){\line(2,3){25}}
\thinlines
\put(100,25){\line(0,1){75}}
\put(25,100){\line(1,0){75}}
\thicklines
\put(75,62.5){\line(2,3){25}}
\put(50,25){\dashbox{5}(25,75)[b]{$^{C_\lam}$}}
\put(14,100){\makebox(0,0){$\frac{\lam}{1-\lam}$}}
\put(50,13){\makebox(0,0)[b]{$\lam$}}
\put(75,7){\makebox(0,0)[b]{$\frac{\lam^2}{1-\lam}$}}
\put(100,7){\makebox(0,0)[b]{$\frac{\lam}{1-\lam}$}}
\end{picture}
\caption{The function $F_\beta$ is defined on the complement of the ``overlap region'' $C_\lam$. If we extend $F_\beta$
to $C_\lam$ using the ``lower branch'' in the dashbox, we get the greedy $\beta$-transformation, and taking the
``upper branch'' results in the lazy $\beta$-transformation.} \label{fig:greedylazy}
\end{figure}
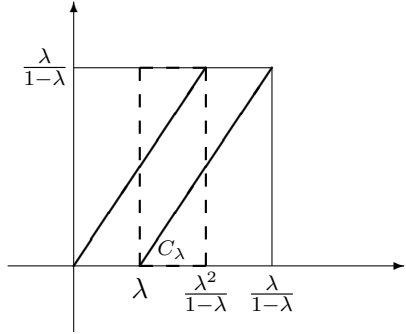

Now we can define the sets which appear in Theorems \ref{thm:class1} and \ref{thm:bias1}. Let
\begin{equation} \label{eq:definition-A-lam-tilde}
\wtil{\Ak}_\lam:=\bigcup_{\delta>0} \wtil{\Ak}_{\lam,\delta}:=
\bigcup_{\delta>0} \left\{x\in \Ak_\lam:\ \dist(\{F_\beta^nx\}_{n\ge 0}, C_\lam) \ge \delta \right\}\,,
\end{equation}
where $\beta=\lam^{-1}$. The statements in Theorem \ref{thm:class1} about the size of $\wtil{\Ak}_\lam$  follow from \cite{GlendinningSidorov01}, as we now explain.

Let
$$
\Uk_\lam = \Pi_\lam^{-1}(\Ak_\lam),\ \ \ \ \ \wtil{\Uk}_\lam = \Pi_\lam^{-1}(\wtil{\Ak}_\lam).
$$
Observe that for $\iii \in \Uk_\lam$ and $x = \Pi_\lam(\iii) \in \Ak_\lam$ we have, for $\beta= \lam^{-1}$,
$$
F_\beta^n(x) = \Pi_\lam(\sig^n \iii).
$$
Also note that the digits $i_n$ of the expansion of $x\in \Ak_\lam$ are given by the symbolic dynamics of $F_\beta$.

W. Parry \cite{Parry60} characterized the set of all sequences arising from greedy $\beta$-expansions as
those which are lexicographically less than the greedy expansion of 1, and the same holds for all their shifts (a minor modification
is necessary if the greedy expansion of 1 is finite).

The next lemma follows from the characterization of unique expansions as being both greedy and lazy.
Denote by the bar the ``flip'' of ``reflection'', i.e., $\ov{1}=0,\ \ov{0}=1$.

\begin{lemma} \label{lem2} {\em \cite[Lemma 4]{GlendinningSidorov01}} Let $\lam \in (\half,1)$ and
let $1 = \sum_{n=1}^\infty d_n \lam^n$ be the greedy expansion of 1 in base $\beta = \lam^{-1}$.
Then
\begin{eqnarray*}
\Uk_\lam = \{ \iii \in \{0,1\}^\N:\ (i_n,i_{n+1},\ldots ) & \prec & (d_1,d_2,\ldots) \\
\mbox{\rm and}\ \ (\ov{i}_n, \ov{i}_{n+1},\ldots) & \prec & (d_1,d_2,\ldots),\ n\in \N\}.
\end{eqnarray*}
\end{lemma}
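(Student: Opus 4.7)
The approach is to reduce the characterization of unique expansions to Parry's classical description of greedy $\beta$-expansions, using the reflection symmetry $x \mapsto \frac{\lam}{1-\lam} - x$ to convert lazy expansions into greedy ones. The three ingredients are: (a) Parry's characterization of greedy expansions; (b) the identity $\Pi_\lam(\ov\iii) = \frac{\lam}{1-\lam} - \Pi_\lam(\iii)$, which follows from $\sum_{n\ge 1}\lam^n = \frac{\lam}{1-\lam}$; and (c) the observation that the set of expansions of a point $x$ is sandwiched in lex order between its greedy and lazy expansions, so $x \in \Ak_\lam$ iff these two expansions coincide.

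\textbf{Step 1 (Parry).} A sequence $\iii \in \{0,1\}^\N$ is the greedy expansion of $\Pi_\lam(\iii)$ iff $\sig^{n-1}\iii = (i_n,i_{n+1},\ldots) \prec (d_1,d_2,\ldots)$ for every $n\ge 1$. The standard Parry statement allows weak inequality, but strict inequality is what corresponds to \emph{uniqueness} of the greedy representative in a neighborhood of each orbit point: if some tail equals $(d_1,d_2,\ldots)$, one could replace it with an alternative expansion of $1$ (which exists for every $\lam\in(\half,1)$) to produce another expansion of $\Pi_\lam(\iii)$, violating uniqueness.

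\textbf{Step 2 (Reflection).} Using (b) above, flipping bits corresponds to the involution $x \mapsto \frac{\lam}{1-\lam}-x$ on $I_\lam$, and simultaneously reverses the lexicographic order on $\{0,1\}^\N$. Since the greedy (resp.\ lazy) expansion of $x$ is the lex-maximum (resp.\ lex-minimum) among all expansions of $x$, this reflection interchanges greedy and lazy. Consequently, $\iii$ is the lazy expansion of $x = \Pi_\lam(\iii)$ iff $\ov\iii$ is the greedy expansion of $\frac{\lam}{1-\lam}-x$, and by Step 1 this is equivalent to $(\ov i_n,\ov i_{n+1},\ldots)\prec (d_1,d_2,\ldots)$ for all $n\ge 1$.

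\textbf{Step 3 (Assemble).} Combining: the first condition in the lemma holds iff $\iii$ is the greedy expansion of $x$; the second iff $\iii$ is the lazy expansion of $x$. Since an expansion is unique iff greedy equals lazy (by (c)), both conditions hold simultaneously iff $x\in\Ak_\lam$, i.e.\ $\iii\in\Uk_\lam$.

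\textbf{Main obstacle.} The one delicate point is the strict versus weak inequality in Parry's condition. One has to verify that for every $\lam\in(\half,1)$ the number $1$ admits at least two expansions, so that a tail coinciding with $(d_1,d_2,\ldots)$ can genuinely be swapped to produce an alternative expansion of $x$; equivalently, one must handle the special cases where the greedy expansion of $1$ terminates (which needs a short separate argument with the quasi-greedy expansion). This is essentially the only nonroutine ingredient in the argument.
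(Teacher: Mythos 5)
The paper does not actually prove this lemma: it is quoted from Glendinning--Sidorov \cite[Lemma 4]{GlendinningSidorov01}, with only the one-line remark that it ``follows from the characterization of unique expansions as being both greedy and lazy.'' Your plan --- unique iff greedy equals lazy, Parry's lexicographic criterion for greedy sequences, and the reflection $\iii\mapsto\ov{\iii}$ (equivalently $x\mapsto\frac{\lam}{1-\lam}-x$), which reverses the lexicographic order and so turns the lazy condition into a second greedy condition --- is precisely that remark spelled out, so in structure you are doing exactly what the paper intends.

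However, the way you propose to close the one delicate step does not work. You want to justify the strict inequality in Parry's condition by verifying that $1$ admits at least two expansions in base $\lam^{-1}$ for every $\lam\in(\half,1)$, so that a tail equal to $(d_1,d_2,\dots)$ can always be swapped for an alternative expansion of $1$. This verification must fail: there exist uncountably many \emph{univoque} bases $\beta\in(\beta_c,2)$ --- the Komornik--Loreti constant $\beta_c$ appearing in the paper is by definition the smallest of them --- in which $1$ has a \emph{unique} expansion. Equivalently, for uncountably many $\lam\in(\half,\beta_c^{-1}]$ no swap is possible, and this is exactly the parameter range in which the paper most needs the lemma (it is where $\dim_H\wtil{\Ak}_\lam>0$). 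At such $\lam$ the sequence $(d_1,d_2,\dots)$ itself belongs to $\Uk_\lam$ yet violates the displayed condition at $n=1$, so the stated equality already has a (harmless, countable) boundary defect, just as it does for $0^\infty$ and $1^\infty$, which always lie in $\Uk_\lam$ but never satisfy both strict inequalities. The honest route to the strict-versus-weak dichotomy is the one you mention only parenthetically: compare with the quasi-greedy expansion of $1$ and treat the finite-expansion and univoque cases separately, as Glendinning and Sidorov do. As written, your ``main obstacle'' paragraph asserts a verification that cannot succeed.
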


\begin{lemma} \label{lem:nested-unique-exp}
For $\half < \lam_1 < \lam_2< g$, we have
$$
\Uk_{\lam_1}\supset \wtil{\Uk}_{\lam_1} \supset \Uk_{\lam_2} \supset \wtil{\Uk}_{\lam_2}.
$$
\end{lemma}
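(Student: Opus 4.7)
The outer containments $\Uk_{\lam_1}\supset\wtil\Uk_{\lam_1}$ and $\Uk_{\lam_2}\supset\wtil\Uk_{\lam_2}$ are immediate from the definition \eqref{eq:definition-A-lam-tilde}. I will focus on the middle inclusion $\wtil\Uk_{\lam_1}\supset\Uk_{\lam_2}$, attacking it in two stages: first a symbolic inclusion $\Uk_{\lam_2}\subset\Uk_{\lam_1}$, and then a uniform bound keeping the $F_{\beta_1}$-orbit of $\Pi_{\lam_1}(\iii)$ at positive distance from the gap $C_{\lam_1}$ for every $\iii\in\Uk_{\lam_2}$.

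For the first stage, let $\mathbf d^{(j)}$ denote the greedy expansion of $1$ in base $\beta_j=\lam_j^{-1}$, so $\Pi_{\lam_j}(\mathbf d^{(j)})=1$. Since $\Pi_\lam(\mathbf d^{(2)})=\sum_k d^{(2)}_k\lam^k$ is strictly increasing in $\lam\in(0,1)$, we have $M_0:=\Pi_{\lam_1}(\mathbf d^{(2)})<\Pi_{\lam_2}(\mathbf d^{(2)})=1=\Pi_{\lam_1}(\mathbf d^{(1)})$. Letting $\mathbf e$ be the $\lam_1$-greedy expansion of $M_0$, one chains $\mathbf d^{(2)}\preceq\mathbf e\prec\mathbf d^{(1)}$ using that greedy expansions are lexicographically maximal among all expansions of a given point, and strictly order-preserving in the real argument. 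Plugging $\sigma^n\iii\prec\mathbf d^{(2)}\prec\mathbf d^{(1)}$ (and similarly for $\overline\iii$) into Lemma~\ref{lem2} yields $\Uk_{\lam_2}\subset\Uk_{\lam_1}$.

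The key step is the bound $\sup\{\Pi_{\lam_1}(\jjj):\jjj\in\Uk_{\lam_2}\}\le M_0<1$. Any such $\jjj$ lies in $\Uk_{\lam_1}$, so it is its own $\lam_1$-greedy expansion; combining the chain $\jjj\prec\mathbf d^{(2)}\preceq\mathbf e$ from Lemma~\ref{lem2} with the fact that greedy expansions in base $\lam_1$ give an order isomorphism between $[0,\lam_1/(1-\lam_1)]$ and its image yields $\Pi_{\lam_1}(\jjj)\le\Pi_{\lam_1}(\mathbf e)=M_0$. To convert this into avoidance of $C_{\lam_1}=[\lam_1,\lam_1^2/(1-\lam_1)]$, I will first check the elementary equivalence that $\Pi_{\lam_1}(\kkk)\in C_{\lam_1}$ holds iff either $k_1=0$ with $\Pi_{\lam_1}(\sigma\kkk)\ge 1$, or $k_1=1$ with $\Pi_{\lam_1}(\overline{\sigma\kkk})\ge 1$ (the second case using $\Pi_{\lam_1}(\overline{\sigma\kkk})=\lam_1/(1-\lam_1)-\Pi_{\lam_1}(\sigma\kkk)$). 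The characterization in Lemma~\ref{lem2} is symmetric in $\iii$ and $\overline\iii$, so $\Uk_{\lam_2}$ is closed under both $\sigma$ and the flip, and hence $\sigma^{n+1}\iii,\overline{\sigma^{n+1}\iii}\in\Uk_{\lam_2}$ whenever $\iii\in\Uk_{\lam_2}$. Their $\Pi_{\lam_1}$-values are thus $\le M_0<1$, so the equivalence above fails at every $n$, and a direct estimate turns the slack $1-M_0$ into the quantitative bound $\dist(\Pi_{\lam_1}(\sigma^n\iii),C_{\lam_1})\ge\lam_1(1-M_0)$, placing $\iii$ in $\wtil\Uk_{\lam_1,\delta}$ for $\delta=\lam_1(1-M_0)$.

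The genuinely delicate point is the monotonicity move used to pass from lex inequalities to $\Pi_{\lam_1}$-inequalities. For $\lam>\tfrac12$ the projection $\Pi_\lam$ is \emph{not} lex-monotone on $\{0,1\}^\N$ (for instance $(0,1,1,1,\ldots)\prec(1,0,0,0,\ldots)$ while $\Pi_\lam$ reverses their order), so one must carefully exploit the fact that elements of $\Uk_{\lam_1}$ coincide with their $\lam_1$-greedy expansions to chain the comparisons through the auxiliary greedy sequences $\mathbf d^{(2)}$ and $\mathbf e$. Once this step is correctly set up, the remaining calculations are routine arithmetic.
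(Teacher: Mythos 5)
Your argument is correct, but it takes a genuinely different route from the paper's. The paper proves the middle inclusion $\wtil{\Uk}_{\lam_1}\supset\Uk_{\lam_2}$ by a direct two-digit estimate: for $\iii\in\Uk_{\lam_2}$ with $i_n=0$ it splits into the cases $i_{n+1}=0$ and $i_{n+1}=1$ (handling $i_n=1$ by the flip symmetry), bounds $\Pi_{\lam_1}(\sigma^{n-1}\iii)$ by comparing the tail series at $\lam_1$ with the same series at $\lam_2$, and reads off the explicit gap $\delta=\min\bigl\{(\lam_2-\lam_1)(\lam_1+\lam_2-1),\,\lam_1(1-\lam_1-\lam_1^2)/(1-\lam_1)\bigr\}$ using only $\lam_1+\lam_2>1$ and $\lam_1<g$. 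You instead route everything through the lexicographic machinery: Parry-type monotonicity of the greedy expansion of $1$ gives the symbolic inclusion $\Uk_{\lam_2}\subset\Uk_{\lam_1}$ via Lemma~\ref{lem2}, and the order-isomorphism property of greedy expansions --- correctly applied only to sequences in $\Uk_{\lam_1}$, which coincide with their own greedy expansions; you are right that $\Pi_\lam$ is not lex-monotone globally --- yields the uniform bound $\Pi_{\lam_1}(\jjj)\le M_0<1$ on $\Uk_{\lam_2}$, which combined with the shift- and flip-invariance of $\Uk_{\lam_2}$ and your (correct) characterization of membership in $C_{\lam_1}$ gives $\delta=\lam_1(1-M_0)$. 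Both proofs are quantitative. Yours is structurally appealing in that it isolates the monotonicity $\Uk_{\lam_2}\subset\Uk_{\lam_1}$ as a statement of independent interest, but it leans on more background (the lexicographic characterization, including the caveat when the greedy expansion of $1$ is finite, which you inherit from Lemma~\ref{lem2} exactly as the paper does) and produces a less explicit $\delta$, since $M_0=\Pi_{\lam_1}(\mathbf{d}^{(2)})$ requires knowing the greedy expansion of $1$ in base $\lam_2^{-1}$. The paper's four-line computation is shorter and entirely self-contained.
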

\begin{proof}
We only need to show the middle inclusion. Let $\iii \in \Uk_{\lam_2}$. Then
\begin{eqnarray*}
\Pi_{\lam_2}(\sig^{n-1} \iii) < \lam_2,\ & \mbox{if} \ i_{n} = 0,\\
\Pi_{\lam_2}(\sig^{n-1} \iii) > \frac{\lam_2^2}{1-\lam_2},\  & \mbox{if} \ i_{n} = 1.
\end{eqnarray*}
First assume that $i_{n} = 0$.
If $i_{n+1}=0$, then
$$
\Pi_{\lam_1}(\sig^{n-1}\iii)  = \sum_{k=n}^\infty i_k \lam_1^{k-n+1} \le \frac{\lam_1^3}{1-\lam_1} < \lam_1,
$$
since $1-\lam_1 - \lam_1^2 > 0$ for $\lam_1 < g$.
If $i_{n+1}=1$, then
\begin{eqnarray*}
\Pi_{\lam_1}(\sig^{n-1} \iii)  & = & \lam_1^2 + \sum_{k=n+2}^\infty i_k \lam_1^{k-n+1} \\
                              & \le & (\lam_1^2-\lam_2^2) + \Pi_{\lam_2}(\sig^{n-1}\iii) \\
                              & < & (\lam_1^2-\lam_2^2) + \lam_2 < \lam_1,
\end{eqnarray*}
using that $\lam_2> \lam_1$ and $\lam_1 + \lam_2 > 1$.

The case $i_n = 1$ reduces to the previous one by symmetry, by considering $\ov{\iii}=(\ov{i}_1,\ov{i}_2,\ldots)$.
Thus, $\Uk_{\lam_2} \subset
\Pi_{\lam_1}^{-1}(\wtil{\Ak}_{\lam_1,\delta})\subset \wtil{\Uk}_{\lam_1}$, where
$$
\delta = \min\left\{(\lam_2-\lam_1)(\lam_1+\lam_2-1),\, \frac{\lam_1(1-\lam_1-\lam_1^2)}{1-\lam_1}\right\}.
$$
\end{proof}

\medskip

Let
$$
\ell_j(\iii,n) = \card\{k\in \{1,\ldots,n\}:\ i_k = j\}\ \mbox{for}\ j=0,1,
$$
and define the frequency of $j$'s in $\iii$ by
$$
\freq_j(\iii) = \lim_{n\to \infty} n^{-1} \ell_j(\iii,n),
$$
if the limit exists.
In the next lemma, we equip the sequence space $\{0,1\}^\N$ with the metric $\varrho(\iii,\jjj) = \lam^{|\iii\wedge \jjj|}$. The Hausdorff dimension on $\{0,1\}^\N$ is calculated with respect to this metric. (Of course, one may change the power of the exponent from $\lam$ to any $\gam \in (0,1)$; the base $\half$ is a common choice.)

\begin{lemma} \label{lem:freq} {\bf (i)} For all $\lam \in [\beta_1^{-1},g)$,
$$
\freq_0(\iii) =\frac{1}{2}\ \ \mbox{for all} \ \iii \in \Uk_\lam.
$$

{\bf (ii)} For all $\lam \in (\half,\beta_1^{-1})$ there exists $r_\lam \in (0,\half)$ such that
\begin{equation} \label{eq:dim2}
\dim_H\{\iii \in \Uk_\lam:\ \freq_0(\iii) =r\} > 0\ \ \mbox{for all}\ r\in (r_\lam, \,1-r_\lam).
\end{equation}
Moreover, $r_\lam\searrow 0$ as $\lam\searrow \half$. Also,
$$
\dim_H\{\iii \in \Uk_\lam:\ \freq_0(\iii)\ \mbox{does not exist}\}>0.
$$
Furthermore,
\begin{equation} \label{eq:dim-tends-to-1}
\dim_H(\Uk_\lam) \to 1 \quad\text{as } \lam\searrow \frac12.
\end{equation}
\end{lemma}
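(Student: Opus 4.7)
For part (i), the defining equation $1 = \beta_1^{-1} + \sum_{n=1}^\infty \beta_1^{-2n}$ identifies the greedy expansion of $1$ in base $\beta_1$ as $d = (1,1,0,1,0,1,0,\ldots)$. Since greedy expansions of $1$ are monotone in the base, for $\lam \in [\beta_1^{-1}, g)$ the greedy expansion of $1$ in base $\lam^{-1}$ is lexicographically $\preceq d$, so Lemma~\ref{lem2} yields $\Uk_\lam \subseteq \Uk_{\beta_1^{-1}}$ and it suffices to prove $\freq_0(\iii) = \tfrac{1}{2}$ on $\Uk_{\beta_1^{-1}}$. I would do this by a direct structural analysis: combining Lemma~\ref{lem2} with the flipped constraint $(i_n, i_{n+1}, \ldots) \succ \bar d = (0,0,1,0,1,0,\ldots)$, one shows that every occurrence of $11$ at position $n$ in $\iii$ is forced to be followed by a finite string of the form $0(10)^j 0$ for some $j \ge 0$, and by symmetry every $00$ by $1(01)^k 1$. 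Each such chunk $11 \cdot 0(10)^j 0$ contains $j+2$ ones and $j+2$ zeros; consecutive chunks must alternate in type (since the trailing $00$ of a $11$-chunk is the leading $00$ of the next chunk), and any initial segment before the first double is purely alternating. Tracking the running imbalance $\ell_1(\iii, n) - \ell_0(\iii, n)$ through any such chunk shows it remains in $\{-2, -1, 0, 1, 2\}$, so $|\ell_0(\iii, n) - n/2| \le 1$ and $\freq_0(\iii) = \tfrac{1}{2}$.

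For part (ii), let $N(\lam)$ be the largest integer with $d_1 = \cdots = d_{N(\lam)} = 1$ in the greedy expansion of $1$ in base $\lam^{-1}$; then $N(\lam) \ge 2$ for $\lam \in (\tfrac{1}{2}, \beta_1^{-1})$ and $N(\lam) \to \infty$ as $\lam \searrow \tfrac{1}{2}$. The subshift of finite type $X_N := \{\iii : \text{no run of equal digits of length} \ge N\}$ is contained in $\Uk_\lam$ whenever $N \le N(\lam)$: every shift of $\iii \in X_N$ starts with fewer than $N(\lam)$ consecutive $1$s and is therefore $\prec (1^{N(\lam)} 0 \cdots) \preceq d$, with the symmetric flipped argument. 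For $N \ge 3$, $X_N$ is a mixing SFT with topological entropy $\log \rho_N$, where $\rho_N \nearrow 2$, and supports ergodic shift-invariant measures of positive entropy $h(r, N)$ realizing any $0$-frequency $r \in (\tfrac{1}{N}, 1 - \tfrac{1}{N})$. The standard correspondence $\dim_H = (\text{entropy})/|\log \lam|$ for the metric $\varrho(\iii, \jjj) = \lam^{|\iii \wedge \jjj|}$ then gives simultaneously $\dim_H \Uk_\lam \ge \log \rho_{N(\lam)}/|\log \lam| \to 1$ as $\lam \searrow \tfrac{1}{2}$ (which is \eqref{eq:dim-tends-to-1}) and $\dim_H\{\iii \in \Uk_\lam : \freq_0(\iii) = r\} \ge h(r, N(\lam))/|\log \lam| > 0$ for $r \in (r_\lam, 1 - r_\lam)$ with $r_\lam := 1/N(\lam) \searrow 0$. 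The positive-dimensional set of sequences whose $\freq_0$ does not exist is then produced by a standard Besicovitch-type construction inside $X_{N(\lam)}$: alternately concatenate long blocks sampled from two ergodic measures with distinct frequencies, giving a positive-dimensional set of sequences with differing upper and lower $0$-densities.

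The principal obstacle I anticipate is the combinatorial bookkeeping in part (i): the balanced-chunk picture is clear in spirit, but a rigorous proof must handle several boundary cases---sequences that are eventually alternating, those containing no doubles at all, the overlap where the trailing $00$ of a $11$-chunk is simultaneously the leading $00$ of the next $00$-chunk, and the initial alternating prefix. A secondary issue in (ii) is that for $\lam$ only slightly below $\beta_1^{-1}$ one has $N(\lam) = 2$, so that $X_{N(\lam)}$ is trivial (two alternating sequences, dimension zero); for this narrow parameter range the SFT argument must be replaced by a finer Bernoulli-type construction exploiting the precise first position where the greedy expansion in base $\lam^{-1}$ deviates from $(1,1,0,1,0,1,0,\ldots)$. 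This is delicate but consistent with the statement of the lemma, which permits $r_\lam$ arbitrarily close to $\tfrac{1}{2}$ when $\lam$ is near $\beta_1^{-1}$.
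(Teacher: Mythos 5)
Your overall architecture matches the paper's in most respects. For part (i), the paper likewise reduces by monotonicity (Lemma \ref{lem:nested-unique-exp}) to the single base $\beta_1$, where the two-sided lexicographic constraints of Lemma \ref{lem2} force every $\iii\in\Uk_{\beta_1^{-1}}$ to decompose into balanced blocks; the paper's decomposition is the cleaner, non-overlapping one into words from $\{1(10)^k0:\ k\ge 0\}$ and their flips (each containing $k+1$ zeros and $k+1$ ones), which avoids the overlapping-chunk bookkeeping you worry about, but your imbalance-tracking argument is the same idea and leads to the same bound $|\ell_0(\iii,n)-n/2|\le 1$. For the assertions $r_\lam\searrow 0$ and \eqref{eq:dim-tends-to-1}, your run-length subshifts $X_N$ are in substance the paper's multinacci construction ($\lam<g_k$ implies that sequences avoiding $0^k$ and $1^k$ lie in $\Uk_\lam$), and the entropy-to-dimension passage is unproblematic in the symbolic metric $\varrho$.

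The genuine gap is the one you flag yourself and then defer. For $\lam$ in the subinterval of $(\half,\beta_1^{-1})$ where $N(\lam)=2$ --- that is, for $\lam$ between the reciprocal tribonacci number $g_3=0.5436\ldots$ and $\beta_1^{-1}=0.5549\ldots$ --- the subshift $X_{N(\lam)}=X_2$ consists of the two alternating sequences and carries no entropy, so your construction yields neither \eqref{eq:dim2} nor the positive-dimensional set of non-convergent frequencies there. This is not a marginal case: both statements are asserted for all $\lam\in(\half,\beta_1^{-1})$, and this is exactly the range in which part (ii) has content beyond part (i). The paper fills it as follows: since $\lam<\beta_1^{-1}$ forces the greedy expansion $(d_n)$ of $1$ to satisfy $1(10)^\infty\prec(d_n)$, there is a $k\ge 0$ with $d_1\cdots d_{2k+3}=1(10)^k11$; then arbitrary concatenations of $u_0=1(10)^{k+1}$ and $u_1=0(01)^{k+1}$ lie in $\Uk_\lam$, these two words have common length $2k+3$ and contain $k+1$ and $k+2$ zeros respectively, so the full shift on $\{u_0,u_1\}$ realizes every frequency in $[\frac{k+1}{2k+3},\frac{k+2}{2k+3}]$ on a positive-dimensional set and also realizes non-existence of the frequency. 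Your proposed ``finer Bernoulli-type construction exploiting the first position where the greedy expansion deviates from $1(10)^\infty$'' is precisely this, but as written it is a placeholder rather than an argument, and without it the lemma is only established for $\lam\in(\half,g_3)$.
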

\begin{remark}
Our proof uses techniques from \cite{GlendinningSidorov01}. The fact \eqref{eq:dim-tends-to-1} is stated in \cite[Example 17]{GlendinningSidorov01} with a sketch of proof; we include a more detailed proof for completeness.
\end{remark}
\begin{proof}[Proof of Lemma \ref{lem:freq}]
(i)
In view of the monotonicity of $\Uk_\lam$, it is enough to prove the statement for $\lam = \beta_1^{-1}$. The greedy expansion of 1 in base $\beta_1$ is
$1 \sim 1(10)^\infty = (1101010\ldots)_{\beta_1}$. If $\iii \in \Uk_\lam$, then by Lemma~\ref{lem2},
$$
(0010101010\ldots) \prec (i_n, i_{n+1} \ldots ) \prec (11010101010\ldots)
$$
It follows that $\iii$ is an arbitrary concatenation of the words in
$$
\Pk:= \{10, 1100, 110100,\ldots,1(10)^k 0,\ldots\}
$$
and their flips. This implies that for all $n$,
$$
|\ell_0(\iii,n) - (n/2)| \le 1,
$$
and the claim follows.

(ii)
If $\lam< \beta_1^{-1}$, then $\beta = \lam^{-1} > \beta_1$ and hence the greedy $\beta$-expansion
$1=\sum_{n=1}^\infty d_n\beta^{-n}$ satisfies $1(10)^\infty \prec (d_n)_1^\infty$. Thus, there exists $k\ge 0$
such that
$$
d_1\ldots d_{2k+3} = 1(10)^k11.
$$
Then $\Uk_\lam$ contains arbitrary concatenations of words $u_0:=1(10)^{k+1}$ and $u_1:= 0(01)^{k+1}$, which can be written as $\prod_1^\infty \{u_0,u_1\}$.
This clearly allows one to get any frequency of 0's in $\iii$ in the
interval $[\frac{k+1}{2k+3}, \frac{k+2}{2k+3}]$, as well as having no frequency at all, taking $\iii =  u_{n_1} u_{n_2} \ldots$ with $n_1 n_2\ldots \in \{0,1\}^\N$.
Moreover, since the set of arbitrary 0-1 sequences $(i_n)_{n\ge 1}$ with a given frequency of 0's in $(0,1)$, has positive dimension, (\ref{eq:dim2}) follows.
Also, the set of arbitrary 0-1 sequences $(i_n)_{n\ge 1}$ with no frequency of 0's, has positive (full) dimension.

It remains to show that $r_\lam\searrow 0$ and $\dim_H(\Uk_\lam)/s(\lam)\to 1$ as $\lam\searrow \half$. To this end, consider the sequence $g_k$ of ``multinacci numbers'',
i.e.\ $1=\sum_{i=1}^k g_k^i$,
so that $g= g_2$. Clearly, $g_k\searrow \half$ as $k\to \infty$. Let $\lam \in (\half,g_k)$. Then any sequence in $\{0,1\}^\N$ without $0^k$ and $1^k$ belongs to
$\Uk_\lam$. In particular, $\Uk_\lam$ contains arbitrary concatenations of $v_0 = 0^{k-1}1$ and $v_1=01^{k-1}$. Taking sequences $\iii=v_{n_1}v_{n_2}\ldots$
we can achieve any frequency of 0's in $\iii$ in the interval $(\frac{1}{k},\frac{k-1}{k})$ on a set of positive dimension.

Also, $\Uk_\lam$ contains all sequences of the form $10\jjj_1 10\jjj_2\ldots$, where $\jjj_1,\jjj_2,\ldots \in \{0,1\}^{k-2}$ are arbitrary. A standard calculation yields
\[
\dim_H(\Uk_\lam) \ge \frac{\log(2^{k-2})}{|\log\lambda^k|} = \frac{k-2}{k} s(\lam).
\]
Since $\dim_H(\{0,1\}^\N)=s(\lam)\to 1$ as $\lam\searrow\half$, the proof is finished.
\end{proof}


\subsection{Proof of the theorems} \label{subsect:proof-of-expansion-theorems}

We start with a lemma which says that, for $x\in \wtil{\Ak}_{\lam,\delta}$, the measure of of a ball centered at $x$ is comparable to the measure of the corresponding symbolic cylinder.

\begin{lemma} \label{lem:meas-when-unique-exp}
For any $\delta>0$ there exists $c_\delta>0$ such that the following holds:
if $x\in \wtil{\Ak}_{\lam,\delta}$ and $\iii\in \{0,1\}^{\N}$ is the unique sequence satisfying $\Pi_\lam(\iii)=x$, then
\begin{equation} \label{eq:ineq1}
\forall\,n\ge 1,\ \ c_\delta p^{\ell_0(\iii,n)} (1-p)^{\ell_1(\iii,n)} \le \nula^p(B(x,\delta\lam^n)) \le
p^{\ell_0(\iii,n)} (1-p)^{\ell_1(\iii,n)}.
\end{equation}
\end{lemma}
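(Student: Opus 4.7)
\medskip

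\textbf{Proof plan.} The plan is to reduce the measure of the ball $B(x,\delta\lam^n)$ to a measure of the cylinder $[\iii|n]$ in the symbol space via the projection $\Pi_\lam$. The key geometric claim is

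\begin{equation} \label{eq:containment-goal}
\Pi_\lam^{-1}(B(x,\delta\lam^n)) \subset [\iii|n],
\end{equation}

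which combined with the product structure of $\eta^p$ and the self-similarity of $\nula^p$ will yield the precise identity

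\begin{equation} \label{eq:equality-goal}
\nula^p(B(x,\delta\lam^n)) = p^{\ell_0(\iii,n)}(1-p)^{\ell_1(\iii,n)} \cdot \nula^p(B(F_\beta^n x,\delta)).
\end{equation}

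The upper bound in the lemma then follows from $\nula^p\le 1$; the lower bound will follow once we show a uniform positive lower bound $\nula^p(B(y,\delta))\ge c_\delta$ for all $y\in I_\lam$.

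\medskip

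To establish \eqref{eq:containment-goal} I would argue by induction on $k$ that if $\jjj\in\{0,1\}^\N$ satisfies $|\Pi_\lam(\jjj)-x|<\delta\lam^n$ then $j_{k+1}=i_{k+1}$ for $k=0,\ldots,n-1$. Assume inductively that the first $k$ digits agree; writing $\Pi_\lam(\jjj)-x=\lam^k(\Pi_\lam(\sigma^k\jjj)-F_\beta^k x)$ we obtain
\[
|\Pi_\lam(\sigma^k\jjj) - F_\beta^k x| < \delta\lam^{n-k} \le \delta.
\]
Now the hypothesis $x\in\wtil{\Ak}_{\lam,\delta}$ tells us $\dist(F_\beta^k x,C_\lam)\ge\delta$, so either $F_\beta^k x<\lam-\delta$ (forcing $i_{k+1}=0$) or $F_\beta^k x>\frac{\lam^2}{1-\lam}+\delta$ (forcing $i_{k+1}=1$). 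In the first case, any $\jjj$ with $j_{k+1}=1$ would have $\Pi_\lam(\sigma^k\jjj)\ge\lam$, producing a gap of at least $\delta$ with $F_\beta^k x$, contradicting the displayed inequality (since $\delta\lam^{n-k}<\delta$ when $k<n$). The other case is symmetric. This proves the induction step and hence \eqref{eq:containment-goal}.

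\medskip

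Given \eqref{eq:containment-goal}, every $\jjj\in\Pi_\lam^{-1}(B(x,\delta\lam^n))$ is of the form $(\iii|n)\kkk$, and the identity $\Pi_\lam(\jjj)-x=\lam^n(\Pi_\lam(\kkk)-F_\beta^n x)$ shows that $\kkk$ ranges precisely over $\Pi_\lam^{-1}(B(F_\beta^n x,\delta))$. Using the product structure of the Bernoulli measure $\eta^p$ and $\nula^p=\eta^p\circ\Pi_\lam^{-1}$, this yields \eqref{eq:equality-goal}. The upper bound in \eqref{eq:ineq1} is then immediate since $\nula^p(B(F_\beta^n x,\delta))\le 1$.

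\medskip

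For the lower bound I need a constant $c_\delta>0$ with $\nula^p(B(y,\delta))\ge c_\delta$ for every $y\in I_\lam$. Since $\supp(\nula^p)=I_\lam$ (the attractor of the IFS $\{S_0,S_1\}$ with positive weights), every open ball centered in $I_\lam$ has positive $\nula^p$-measure. Covering the compact set $I_\lam$ by finitely many balls $B(z_1,\delta/2),\ldots,B(z_K,\delta/2)$ with $z_j\in I_\lam$ and noting $B(z_j,\delta/2)\subset B(y,\delta)$ whenever $y\in B(z_j,\delta/2)$, one obtains the uniform bound $c_\delta=\min_{1\le j\le K}\nula^p(B(z_j,\delta/2))>0$. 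Combined with \eqref{eq:equality-goal} this completes the proof.

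\medskip

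I expect the main substantive point to be establishing the inclusion \eqref{eq:containment-goal}: it is where the assumption $x\in\wtil{\Ak}_{\lam,\delta}$ (that is, uniform separation of the orbit of $x$ from the overlap region $C_\lam$) is crucially used to rule out competing symbolic representations of nearby points. Once this is in hand, the remainder is a routine exploitation of the self-similar structure and compactness of the support.
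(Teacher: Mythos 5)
Your proof is correct and follows essentially the same route as the paper: both peel off the first $n$ digits one at a time, using the separation $\dist(F_\beta^k x,C_\lam)\ge\delta$ to ensure the ball meets only one branch at each level, arriving at the identity $\nula^p(B(x,\delta\lam^n))=p^{\ell_0(\iii,n)}(1-p)^{\ell_1(\iii,n)}\,\nula^p(B(F_\beta^n x,\delta))$ (you phrase this symbolically via $\Pi_\lam^{-1}$, the paper directly via the self-similarity relation, but these are the same decomposition). The only minor difference is the uniform bound $\inf_{y\in I_\lam}\nula^p(B(y,\delta))>0$: you obtain it by compactness of $I_\lam$ together with $\supp(\nula^p)=I_\lam$, whereas the paper iterates \eqref{eq:ss1} finitely many times to get the explicit constant $(\min\{p,1-p\})^N$; both arguments are valid.
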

\begin{proof}
Suppose that $x\in I_\lam \setminus C_\lam$, and moreover, $r\le \dist(x,C_\lam)$. Denote $p_0=p, p_1 = 1-p$, and let
$j=i_1$. It follows from (\ref{eq:ss1}) that
\begin{equation} \label{eq:iter}
\nula^p(B(x,r)) = p_j \nula^p(S_j^{-1}B(x,r)) = p_j \nula^p(B(S_j^{-1} x,\lam^{-1} r)).
\end{equation}
We can apply (\ref{eq:iter}) $n$ times for the ball $B(x,\delta \lam^n)$ to obtain
$$
\nula^p(B(x,\delta\lam^n)) = \prod_{k=0}^{n-1} p_{i_k} \nula^p(B(y, \delta))=
p^{\ell_0(\iii,n)} (1-p)^{\ell_1(\iii,n)} \nula^p(B(y,\delta)),
$$
where $y = S_{i_1}^{-1}\cdots S_{i_n}^{-1} x$. Now the upper bound in (\ref{eq:ineq1}) is immediate, and the lower bound follows from the fact that
$$
\inf_{x\in I_\lam}\nula^p(B(x,\delta)) =:c_\delta>0.
$$
This is easy to see by self-similarity, again applying (\ref{eq:ss1}): if $N\log(1/\lam) > \log(\frac{1}{\delta(1-\lam)})$, then applying
 (\ref{eq:ss1}) $N$ times we obtain one of the sets in the right-hand side containing the entire support of $\nula^p$, so
$$
\nula^p(B(x,\delta)) \ge (\min\{p,1-p\})^N.
$$
\end{proof}

\medskip

Now the following is immediate from the last lemma and the definitions.

\begin{cor} \label{cor-cor3} For $x\in \wtil{\Ak}_\lam$ and $\iii \in \wtil{\Uk}_\lam$ with $\Pi_\lam(\iii) = x$ we have

{\bf (i)} if $r = \freq_0(\iii)$ exists, then
$$
d(\nula^p,x) = \frac{r\log p + (1-r) \log(1-p)}{\log\lam}\,;
$$

{\bf (ii)} if $\freq_0(\iii)$ does not exist, then $\underline{d}(\nula^p,x)< \ov{d}(\nula^p,x)$.
\end{cor}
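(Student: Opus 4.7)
The plan is to deduce both parts directly from the two-sided comparison \eqref{eq:ineq1} in Lemma~\ref{lem:meas-when-unique-exp}. Fix $x\in\wtil{\Ak}_\lam$, choose $\delta>0$ with $x\in\wtil{\Ak}_{\lam,\delta}$, and let $\iii$ be the unique sequence with $\Pi_\lam(\iii)=x$. Taking logarithms in \eqref{eq:ineq1} and dividing by $\log(\delta\lam^n)$, the bounded contributions from $\log c_\delta$ and $\log\delta$ are absorbed into an $O(1)$ error, so for the geometric sequence of radii $r_n:=\delta\lam^n$ one gets
$$
\frac{\log\nula^p(B(x,r_n))}{\log r_n} \;=\; \frac{\ell_0(\iii,n)\log p + \ell_1(\iii,n)\log(1-p) + O(1)}{n\log\lam + O(1)}.
$$

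For part (i), if $\freq_0(\iii)=r$ exists then $\ell_0(\iii,n)/n\to r$ and $\ell_1(\iii,n)/n\to 1-r$, so the displayed ratio tends to $(r\log p+(1-r)\log(1-p))/\log\lam$. To upgrade from the geometric sequence $r_n$ to arbitrary $\rho\searrow 0$, I would interpolate: for $\rho\in[\delta\lam^{n+1},\delta\lam^n)$ the monotonicity of $\nula^p(B(x,\cdot))$ sandwiches $\log\nula^p(B(x,\rho))$ between $\log\nula^p(B(x,r_{n+1}))$ and $\log\nula^p(B(x,r_n))$, while $\log(\delta\lam^{n+1})/\log(\delta\lam^n)\to 1$. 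Hence the full limit $d(\nula^p,x)$ exists and equals the announced value.

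For part (ii), assume $\freq_0(\iii)$ does not exist, so $\underline{r}:=\liminf_n \ell_0(\iii,n)/n < \limsup_n \ell_0(\iii,n)/n =: \ov r$. Extracting subsequences $n_k,n_k'$ along which $\ell_0(\iii,\cdot)/\cdot$ converges respectively to $\underline r$ and $\ov r$, and evaluating the displayed formula at the corresponding radii $r_{n_k}$ and $r_{n_k'}$, one obtains
$$
\underline d(\nula^p,x) \le \frac{\underline r\log p + (1-\underline r)\log(1-p)}{\log\lam} < \frac{\ov r\log p + (1-\ov r)\log(1-p)}{\log\lam} \le \ov d(\nula^p,x),
$$
where the middle strict inequality holds because the expression is affine in $r$ with nonzero slope $(\log p-\log(1-p))/\log\lam$, using that this corollary is only invoked in the biased setting ($p\ne 1/2$) of Theorem~\ref{thm:bias1}.

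The main obstacle is essentially none: the real content has already been encoded in Lemma~\ref{lem:meas-when-unique-exp}, whose proof used the Bernoulli self-similarity \eqref{eq:ss1} together with the avoidance of the overlap region $C_\lam$ (guaranteed by $x\in\wtil{\Ak}_{\lam,\delta}$) to promote a pointwise symbolic count to a two-sided measure estimate. The only mild care needed here is the interpolation from $\rho=\delta\lam^n$ to general $\rho\searrow 0$ in part (i), and the observation in part (ii) that strictness relies on the asymmetry $\log p\ne\log(1-p)$.
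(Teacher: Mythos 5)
Your argument is correct and is exactly the route the paper takes: the paper dismisses this corollary as ``immediate from the last lemma and the definitions,'' and what you have written is precisely the omitted bookkeeping (logarithms of the two-sided bound in Lemma~\ref{lem:meas-when-unique-exp}, interpolation between the geometric radii $\delta\lam^n$, and the subsequence argument for the divergent-frequency case). Your remark that strictness in part (ii) requires $p\ne\frac12$ is a correct and worthwhile observation, since that is indeed the only setting in which part (ii) is invoked.
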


Finally, the next lemma will allow us to transfer the dimension results of Lemma \ref{lem:freq} to the Euclidean setting.

\begin{lemma} \label{lem:bi-Lipschitz}
The map $\Pi_\lam|_{\Uk_\lam}$ is bi-Lipschitz from the metric $\varrho$ (defined before Lemma~\ref{lem:freq}) to the Euclidean metric on $\Ak_\lam$.
\end{lemma}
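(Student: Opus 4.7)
The plan is to produce explicit positive constants $c=c(\lam)$ and $C=C(\lam)$ such that
\[
c\,\lam^n \;\le\; |\Pi_\lam(\iii)-\Pi_\lam(\jjj)| \;\le\; C\,\lam^n
\qquad \text{whenever } \iii,\jjj\in\Uk_\lam \text{ with } |\iii\wedge\jjj|=n.
\]
Since $\varrho(\iii,\jjj)=\lam^n$, these are exactly the bi-Lipschitz bounds. The map is automatically a bijection $\Uk_\lam\to\Ak_\lam$ by the very definition $\Uk_\lam=\Pi_\lam^{-1}(\Ak_\lam)$.

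The upper estimate is the routine telescoping bound: if $\iii$ and $\jjj$ share their first $n$ symbols, then
\[
|\Pi_\lam(\iii)-\Pi_\lam(\jjj)|\le\sum_{k=n+1}^\infty |i_k-j_k|\,\lam^k\le\frac{\lam^{n+1}}{1-\lam},
\]
so one may set $C=\lam/(1-\lam)$. This step uses no feature of $\Uk_\lam$.

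The substance is in the lower bound, where the assumption $\iii,\jjj\in\Uk_\lam$ is used through two observations. First, $\Uk_\lam$ is shift-invariant: this is immediate from Lemma~\ref{lem2}, whose characterisation quantifies over \emph{all} $n\in\Nat$. Second, for any $\kkk\in\Uk_\lam$, $\Pi_\lam(\kkk)\in\Ak_\lam$ must avoid the overlap interval $C_\lam=[\lam,\lam^2/(1-\lam)]$ (otherwise both greedy and lazy first digits disagree, contradicting uniqueness). Consequently, if $k_1=1$ then $\Pi_\lam(\kkk)\ge\lam^2/(1-\lam)$, and if $k_1=0$ then $\Pi_\lam(\kkk)\le\lam$. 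Assume without loss of generality that $i_{n+1}=1$ and $j_{n+1}=0$. Factoring out $\lam^n$ and applying these one-sided bounds to $\sigma^n\iii,\sigma^n\jjj\in\Uk_\lam$ gives
\[
\Pi_\lam(\iii)-\Pi_\lam(\jjj)
=\lam^n\bigl(\Pi_\lam(\sigma^n\iii)-\Pi_\lam(\sigma^n\jjj)\bigr)
\ge \lam^n\Bigl(\frac{\lam^2}{1-\lam}-\lam\Bigr)
=\lam^n\cdot\frac{\lam(2\lam-1)}{1-\lam},
\]
and the constant $\lam(2\lam-1)/(1-\lam)$ is strictly positive for every $\lam>\tfrac12$, as required.

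There is no real obstacle. The only point meriting care is the ``gap'' claim $\Pi_\lam(\kkk)\notin C_\lam$ for $\kkk\in\Uk_\lam$, which is nothing more than the dichotomy between greedy and lazy digit assignments on $C_\lam$ recalled in \S\ref{subsec:expansions}; once this is in hand, the argument is a direct factorisation.
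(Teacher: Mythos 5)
Your proposal is correct and follows essentially the same route as the paper: the upper bound is the standard telescoping estimate, and the lower bound factors out $\lam^n$ and uses that the shifted points $\Pi_\lam(\sigma^n\iii)$, $\Pi_\lam(\sigma^n\jjj)$ lie on opposite sides of the overlap interval $C_\lam$, yielding the same constant $|C_\lam|=\lam(2\lam-1)/(1-\lam)$. The paper phrases this via the orbit of $F_\beta$ (using $F_\beta^n(x)=\Pi_\lam(\sigma^n\iii)$) rather than via shift-invariance of $\Uk_\lam$, but the content is identical.
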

\begin{proof}
It is standard (and very easy) that $\Pi_\lam$ is Lipschitz on $\{0,1\}^\N$. For the other direction, let $\iii,\jjj\in \Uk_\lam$ and $x=\Pi_\lam(\iii),\ y = \Pi_\lam(\jjj)$.
Suppose $|\iii\wedge \jjj|=n$. Then
$i_{n+1}$ and $j_{n+1}$ are different, hence $F_\beta^n(x)$ and $F_\beta^n(y)$ are in different subintervals of $I_\lam\setminus C_\lam$. Thus,
$$
|\Pi_\lam(\iii) - \Pi_\lam(\jjj)| = \lam^n |\Pi_\lam(\sig^n\iii) - \Pi_\lam(\sig^n \jjj)| = \lam^n |F_\beta^n(x)-F_\beta^n(y)| \ge |C_\lam| \varrho(\iii,\jjj),
$$
and the lemma follows.
\end{proof}

\medskip

\begin{proof}[Proof of Theorem~\ref{thm:class1}] The main claim is immediate from Corollary~\ref{cor-cor3} taking $p=\half$.
The statements (i) and (ii) about $\wtil{\Ak}_\lam$ follow from the known results about $\Ak_\lam$ \cite{ErdosJooKomornik90,GlendinningSidorov01} mentioned above, together with Lemma~\ref{lem:nested-unique-exp}. Finally,  (iii) is a consequence of Lemmas \ref{lem:nested-unique-exp}, \ref{lem:freq}(ii) and \ref{lem:bi-Lipschitz} (since bi-Lipschitz maps preserve Hausdorff dimension).
\end{proof}

\medskip

\begin{proof}[Proof of Theorem~\ref{thm:bias1}] The statement (i) follows from Lemmas \ref{lem:freq}(i) and \ref{lem:meas-when-unique-exp}.

The statements (ii) and (iii) of the theorem follow from Lemma~\ref{lem:freq}(ii), Corollary~\ref{cor-cor3} and Lemma~\ref{lem:bi-Lipschitz}, using again that bi-Lipschitz maps preserve Hausdorff dimension.
\end{proof}


\section{Proof of Theorem \ref{thm:small-dim-BCs}} \label{sec:proof-of-small-dim-BCs}

The proof of Theorem \ref{thm:small-dim-BCs} will be based on a combination of the potential-theoretic method with transversality arguments. To the best of our knowledge, this is the first instance in which transversality ideas are used to estimate the multifractal spectrum for a.e. parameter.

Theorem \ref{thm:small-dim-BCs} will be an easy consequence of the following stronger technical result:

\begin{theorem} \label{thm:small-dim-BCs-intermediate}
Fix $p,q\in (0,1)$. For almost all $\lam\in(0,\min\{\lambda_*,p^q(1-p)^{1-q}\})$,
\[
d(\nula^p,x) =\frac{q\log p + (1-q)\log(1-p)}{\log \lam} \quad\text{for }\nula^q\text{-a.e. } x.
\]
If $p^q(1-p)^{(1-q)}<\lambda^*$, then for almost all $\lam\in (p^q(1-p)^{(1-q)},\lam_*)$ we have
\[
d(\nula^p,x) \geq 1 \quad\text{for }\nula^q\text{-a.e. } x.
\]
\end{theorem}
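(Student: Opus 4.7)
The plan is to bound the upper and lower local dimensions of $\nula^p$ at $\nula^q$-typical points separately. For the upper bound, writing $x = \Pi_\lam(\iii)$ with $\iii$ distributed according to $\eta^q$, the inclusion $S_{\iii|n}(I_\lam) \subset B(x, C\lam^n)$ with $C = \lam/(1-\lam)$ yields $\nula^p(B(x, C\lam^n)) \geq p^{\ell_0(\iii, n)}(1-p)^{\ell_1(\iii, n)}$. The strong law of large numbers gives $\ell_0(\iii, n)/n \to q$ for $\eta^q$-a.e.\ $\iii$, so taking logarithms yields
\[
\overline{d}(\nula^p, x) \leq \alpha := \frac{q\log p + (1-q)\log(1-p)}{\log\lam}
\]
for $\nula^q$-a.e.\ $x$, uniformly in $\lam$.

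For the matching lower bound, I would use the standard potential-theoretic fact that if $\Phi_s(x) := \int |x-y|^{-s} \, d\nula^p(y)$ is finite then $\nula^p(B(x,r)) \leq \Phi_s(x) r^s$, hence $\underline{d}(\nula^p, x) \geq s$. It suffices to prove, for every $s < \min\{\alpha, 1\}$, that $\Phi_s(\Pi_\lam(\iii)) < \infty$ for $\eta^q$-a.e.\ $\iii$ and almost every $\lam$ in the range. Writing $K = |\iii \wedge \jjj|$ and factoring $\Pi_\lam(\iii) - \Pi_\lam(\jjj) = \lam^{K+1} f_{\iii,\jjj}(\lam)$ with $|f_{\iii,\jjj}(0)| = 1$, Proposition~\ref{prop:transversality} gives $\int_a^b |f_{\iii,\jjj}(\lam)|^{-s} \, d\lam \leq C$ uniformly in $\iii, \jjj$ for all $s<1$, and hence
\[
\int_a^b |\Pi_\lam(\iii) - \Pi_\lam(\jjj)|^{-s} \, d\lam \leq C' a^{-sK(\iii, \jjj)}.
\]

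The step I expect to be the main obstacle is obtaining a sharp-enough exponent from the combinatorial averaging. A naive Fubini in both $\iii \sim \eta^q$ and $\jjj \sim \eta^p$ produces $\mathbb{E}[a^{-sK}] \sim \sum_k a^{-sk}(pq+(1-p)(1-q))^k$, which by Jensen's inequality converges only for $s$ strictly less than $\alpha$---too weak. The remedy is to condition on $\iii$ first: for any fixed $\iii$,
\[
\mathbb{P}_{\jjj \sim \eta^p}(K(\iii,\jjj) \geq k) = p^{\ell_0(\iii, k)}(1-p)^{\ell_1(\iii, k)},
\]
and on the $\eta^q$-full set where $|\ell_0(\iii, k) - qk| \leq \epsilon k$ eventually, this is bounded by $(p^q(1-p)^{1-q})^k M^{\epsilon k}$, where $M := \max\{p/(1-p),(1-p)/p\}$. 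Thus $\int_a^b \Phi_s(\Pi_\lam(\iii)) \, d\lam$ is finite for such $\iii$ whenever $s < \log(p^q(1-p)^{1-q} M^\epsilon)/\log a$; letting $a \to \lam$ and $\epsilon \to 0$ covers all $s < \min\{\alpha, 1\}$. One final Fubini applied to $\{(\iii, \lam) : \Phi_s(\Pi_\lam(\iii)) = \infty\}$ swaps the quantifiers (from ``for $\eta^q$-a.e.\ $\iii$, finite for Leb-a.e.\ $\lam$'' to ``for Leb-a.e.\ $\lam$, finite for $\eta^q$-a.e.\ $\iii$''), and letting $s$ run through a countable sequence up to $\min\{\alpha, 1\}$ then completes the proof. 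In the first case ($\lam < p^q(1-p)^{1-q}$, so $\alpha < 1$) the matching bounds give $d(\nula^p, x) = \alpha$; in the second ($p^q(1-p)^{1-q} < \lam < \lam_*$, so $\alpha > 1$ while transversality forces $s < 1$) we obtain only $\underline{d}(\nula^p, x) \geq 1$, which is the claimed inequality $d(\nula^p, x) \geq 1$.
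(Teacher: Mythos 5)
Your proposal is correct and follows essentially the same route as the paper: the upper bound via the cylinder-in-ball inclusion plus the law of large numbers, and the lower bound via the potential-theoretic criterion combined with the transversality estimate $\int_a^b|\Pi_\lam(\iii)-\Pi_\lam(\jjj)|^{-s}\,d\lam\le C a^{-s|\iii\wedge\jjj|}$. In particular, your key fix---conditioning on the $\eta^q$-typical sequence so that the probability of depth-$k$ agreement under $\eta^p$ is roughly $e^{-kH_p^q}$ rather than the weaker $(pq+(1-p)(1-q))^k$ from naive Fubini---is exactly the paper's device of restricting to a set $\Sigma$ on which $-\frac{1}{n}\log\eta^p[\iii|n]\to H_p^q$ uniformly.
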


We first indicate how to complete the proof of Theorem \ref{thm:small-dim-BCs}; the rest of this section is devoted to the proof of Theorem \ref{thm:small-dim-BCs-intermediate}.

\begin{proof}[Proof of Theorem \ref{thm:small-dim-BCs} (Assuming Theorem \ref{thm:small-dim-BCs-intermediate})]
Fix $p\in (0,\frac12)$. By Fubini, for almost every $\lam\in (\frac{1}{2},\lambda^*)$, the following holds for almost every $q\in (0,1)$ such that $\lam < p^q(1-p)^{1-q}$:
\[
d(\nula^p,x) = \frac{q\log p + (1-q)\log(1-p)}{\log \lam} \quad \text{for }\nula^q\text{-a.e. } x.
\]
It follows from Theorem \ref{thm:small-dim-BCs-intermediate} applied to $p=q$ that $\nula^q$ has Hausdorff dimension $h(q)/|\log(\lam)|$ for almost every $\lam\in (0,\lambda_*)$.  Let $\alpha$ be given by the relation \eqref{eq:relation-alpha-q}, and observe that $\alpha$ ranges between $\log (1-p)/\log\lam$ and $1$ (the upper bound is due to the restriction $\lam < p^q(1-p)^{1-q}$).

Let
\begin{equation} \label{eq:def-H}
H_p^q = - q\log(p) - (1-q)\log(1-p) > 0.
\end{equation}
Applying Fubini again, we obtain from the above that, for almost every $\lam\in (0,\lambda_*)$ such that $H_p^q<|\log\lam|$,
the set $\Delta_{\nula^p}(\alpha)$ has Hausdorff dimension at least $\dim_H(\nula^q) = h(q)/|\log(\lam)|$. This concludes the proof.
\end{proof}

We now start the proof of Theorem \ref{thm:small-dim-BCs-intermediate}. The upper bound is standard and holds for all $p,q,\lam$:

\begin{lemma} \label{lem:upper-bound-relative-BC}
With $H_p^q$ as in \eqref{eq:def-H},
\[
\overline{d}(\nu_\lam^p,x) \le \frac{H_p^q}{-\log\lam}\quad\text{for }\nula^q\text{-a.e. }x.
\]
\end{lemma}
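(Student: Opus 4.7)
The plan is to bound $\nula^p(B(x,r))$ from below by the $\eta^p$-mass of a suitable symbolic cylinder, and then invoke the strong law of large numbers under the measure $\eta^q$ to evaluate this cylinder mass asymptotically. Since an upper bound on the upper local dimension amounts to a lower bound on the measure of small balls (after taking $\log$ and dividing by $\log r<0$), this approach is natural.

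Concretely, for any $\iii\in\{0,1\}^\N$ and any $n\ge 1$, the image $\Pi_\lam([\iii|n])$ is an interval of length at most $\lam^{n+1}/(1-\lam)$ containing $x=\Pi_\lam(\iii)$; hence for $r_n:=C\lam^n$ (with any fixed constant $C>\lam/(1-\lam)$), $\Pi_\lam([\iii|n])\subset B(x,r_n)$ and therefore
\[
\nula^p(B(x,r_n))\;\ge\;\eta^p([\iii|n])\;=\;p^{\ell_0(\iii,n)}(1-p)^{\ell_1(\iii,n)}.
\]
Taking logarithms and dividing by $\log r_n<0$ reverses the inequality. Now, by the strong law of large numbers applied to $\eta^q$, for $\eta^q$-almost every $\iii$ one has $\ell_0(\iii,n)/n\to q$ and $\ell_1(\iii,n)/n\to 1-q$; combined with $\log r_n/n\to\log\lam$, this yields
\[
\limsup_{n\to\infty}\frac{\log\nula^p(B(x,r_n))}{\log r_n}\;\le\;\frac{q\log p+(1-q)\log(1-p)}{\log\lam}\;=\;\frac{H_p^q}{-\log\lam}.
\]

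To upgrade from the geometric sequence $r_n$ to all radii $r\searrow 0$, for any $r\in[r_{n+1},r_n]$ monotonicity gives $\nula^p(B(x,r))\ge\nula^p(B(x,r_{n+1}))$, while $\log r_{n+1}/\log r\to 1$, so the full $\limsup$ over $r$ obeys the same bound. Finally, since $\nula^q=\eta^q\circ\Pi_\lam^{-1}$, the bound transfers to $\nula^q$-a.e. $x$. The argument is routine; the only care required is sign-tracking when dividing by $\log r<0$ and the harmless interpolation between the discrete scales $r_n$, so I do not expect any serious obstacle.
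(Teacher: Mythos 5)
Your proof is correct and follows essentially the same route as the paper: enclose the projected cylinder $\Pi_\lam[\iii|n]$ in a ball of radius comparable to $\lam^n$, bound $\nula^p(B(x,r_n))$ below by $\eta^p([\iii|n])$, and apply the law of large numbers under $\eta^q$. Your additional care about interpolating between the discrete scales $r_n$ and tracking signs is sound but routine; the paper leaves those details implicit.
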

\begin{proof}
Clearly,
\begin{equation} \label{eq:cylinder-inside-ball}
\Pi_\lam[\iii|n] \subset  B\left(\Pi_\lam(\iii), \diam(\Pi_\lam[\iii|n])\right) = B\left(\Pi_\lam(\iii), \frac{\lam}{1-\lam}\lam^n\right).
\end{equation}
Hence
\[
\overline{d}(\nu_\lam^p, \Pi_\lam(\iii)) \le \limsup_{n\to\infty} \frac{\eta^p([\iii|n])}{-\log\lam}.
\]
By the law of large numbers, the right-hand side equals $H_p^q/|\log\lam|$ for $\eta^q$-a.e. $\iii$, and this implies the lemma.
\end{proof}

In the remainder of this section we will find a lower bound for the $\nu_\lam^q$-typical value of $d(\nu_\lam^p,x)$, by employing transversality techniques. Unfortunately, as indicated earlier, we obtain results only for typical $\lam$ (for a fixed pair $p,q$).

The following is a simple but key lemma, which enables the use of the potential-theoretic method in the calculation of the multifractal spectrum.

\begin{lemma} \label{lem:potential-theoretic}
Let $\mu$ and $\nu$ be Borel probability measures on $\R^d$. Suppose that
\[
\int\int\frac{d\mu(x)d\nu(y)}{|x-y|^s}<\infty
\]
for some $s\ge 0$. Then $\underline{d}(\mu,x)\geq s$ for $\nu$-almost all $x$.
\end{lemma}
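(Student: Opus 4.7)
The approach I would take is the standard potential-theoretic one: deduce local finiteness of the $s$-potential of $\mu$ from the finite energy hypothesis, then convert that into a pointwise polynomial upper bound on the $\mu$-measure of small balls, which is exactly what the lower bound on $\underline{d}(\mu,\cdot)$ requires.

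The plan, in order, is as follows. First, I would introduce the $s$-potential of $\mu$,
\[
U_\mu^s(x) := \int \frac{d\mu(y)}{|x-y|^s},
\]
and apply Fubini's theorem to the hypothesis to conclude $\int U_\mu^s(x)\,d\nu(x) < \infty$, so that $U_\mu^s(x) < \infty$ for $\nu$-almost every $x$. Second, for any such $x$ and any $r>0$, I would use the trivial pointwise estimate
\[
U_\mu^s(x) \ge \int_{B(x,r)} \frac{d\mu(y)}{|x-y|^s} \ge r^{-s}\,\mu(B(x,r)),
\]
which follows from $|x-y|<r$ on $B(x,r)$; rearranging gives $\mu(B(x,r)) \le r^s\, U_\mu^s(x)$. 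Third, I would take logarithms and divide by $\log r < 0$ (for small $r$) to obtain
\[
\frac{\log \mu(B(x,r))}{\log r} \ge s + \frac{\log U_\mu^s(x)}{\log r},
\]
and observe that the right-hand side tends to $s$ as $r\searrow 0$, since $U_\mu^s(x)$ is a finite positive constant. Taking $\liminf$ yields $\underline{d}(\mu,x) \ge s$ at every point $x$ of the $\nu$-full measure set produced in the first step, which is the stated conclusion.

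There is no real obstacle here: this is a short, classical argument in the spirit of Frostman's lemma. The only pedantic care concerns the degenerate cases: $s=0$ makes the conclusion vacuous, and $U_\mu^s(x)=0$ makes the ball estimate only stronger. Everything else is routine.
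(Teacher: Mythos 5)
Your proof is correct and follows essentially the same route as the paper: the paper simply cites as ``well-known'' the characterization $\underline{d}(\mu,x)=\sup\{t:\int|x-y|^{-t}\,d\mu(y)<\infty\}$ and lets Fubini do the rest, whereas you spell out Fubini and verify the one direction of that characterization that is actually needed (the pointwise bound $\mu(B(x,r))\le r^s U_\mu^s(x)$). No gaps; this is just a fully written-out version of the paper's one-line argument.
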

\begin{proof}
This is immediate from the fact that
\[
\underline{d}(\mu,x) = \sup\left\{t: \int |x-y|^{-t} \mu(y) <\infty\right\},
\]
which is well-known and easy to verify.
\end{proof}

In order to apply the previous Lemma to Bernoulli convolutions, we recall the following crucial transversality result; see \cite[Corollary 2.9]{ShmerkinSolomyak06}, or \cite{PeresSolomyak96} for an alternative but explicit approach that yields a somewhat worse constant $\lambda_*$.

\begin{prop} \label{prop:transversality}
Let $\lam_*=0.66847$. There exists a constant $C>0$ such that the following holds. Let $\iii,\jjj\in\{0,1\}^\N$ such that $i_1\neq j_1$. Then
\[
\Leb\{\lambda\in (0,\lam_*): |\Pi_\lambda(\iii)-\Pi_\lambda(\jjj)|<\delta\} < C\delta
\]
for all $\delta>0$.
\end{prop}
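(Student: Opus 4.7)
The plan is to reduce the statement to a bound on where a one-variable power series is small. Since $i_1 \neq j_1$, by the symmetry $(\iii,\jjj) \leftrightarrow (\jjj,\iii)$ I may assume $i_1 = 1$, $j_1 = 0$. Then
\[
\Pi_\lam(\iii) - \Pi_\lam(\jjj) = \lam\, g(\lam), \qquad g(\lam) := 1 + \sum_{n=1}^\infty c_n\lam^n, \qquad c_n := i_{n+1}-j_{n+1} \in \{-1,0,1\}.
\]
Let $\mathcal{G}$ denote the family of all such series. Fix a small $\lam_0>0$; on $(0,\lam_0]$ the triangle inequality yields $|\lam g(\lam)| \geq \lam(1 - \lam/(1-\lam)) \gtrsim \lam$, so this region contributes at most $O(\delta)$ to the target measure regardless of $\iii,\jjj$. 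It thus suffices to prove
\[
\Leb\{\lam \in (\lam_0,\lam_*) : |g(\lam)| < \delta\} \leq C\delta
\]
uniformly in $g \in \mathcal{G}$.

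\textbf{The transversality condition.} The core of the argument is to establish \emph{$\delta$-transversality}: there exist $\delta_0, \eps>0$ such that for every $g \in \mathcal{G}$ and every $\lam \in (0,\lam_*)$,
\[
g(\lam) \leq \delta_0 \;\Longrightarrow\; g'(\lam) \leq -\eps.
\]
Given this, the measure estimate follows immediately: since $g(0)=1$, by continuity $g$ remains above $\delta_0$ on an initial interval and, once $g(\lam) \leq \delta_0$, the derivative bound forces $g$ to continue decreasing. Thus $\{\lam : g(\lam) \in (-\delta,\delta)\} \subset \{g \leq \delta_0\}$ is a single subinterval of $(0,\lam_*)$ of length at most $2\delta/\eps$, so $C = 2/\eps$ works.

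\textbf{Verifying $\delta$-transversality.} The plan here is a branch-and-bound argument over finite prefixes $(c_1,\ldots,c_k) \in \{-1,0,1\}^k$. For each prefix consider the pointwise lower envelope
\[
g^-_k(\lam) := 1 + \sum_{n=1}^k c_n\lam^n - \frac{\lam^{k+1}}{1-\lam},
\]
and the corresponding upper envelope for $g'$; these bounds are valid for every extension of the prefix, with error $O(\lam^{k+1})$. If $g^-_k(\lam) > \delta_0$ on all of $(0,\lam_*)$, the branch is discarded (no extension of it can ever reach the danger zone). If the upper envelope for $g'$ lies below $-\eps$ on the set $\{g^-_k \leq \delta_0\}$, transversality is verified for the whole branch. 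Otherwise, recurse by appending $c_{k+1}\in\{-1,0,1\}$. Since the envelopes converge exponentially, this recursion terminates on all but finitely many branches, reducing everything to a finite, computer-assisted check of polynomial inequalities on the interval $(0, \lam_*)$.

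\textbf{Main obstacle.} The hard part will be the ``bad'' branches on which the envelopes never manage to separate from zero; these correspond to eventually periodic sequences $(c_n)$ for which $g$ has isolated zeros or near-zeros at algebraic values of $\lam$ in $(0,\lam_*)$. For each such exceptional branch one writes $g$ in closed form as $P(\lam)/(1-\lam^q)$ and verifies the transversality inequality analytically by examining the signs of $P$ and $P'$ at the critical parameters. The precise numerical value $\lam_* = 0.66847$ is essentially the largest $\lam$ at which the simple geometric-series tail estimate in the envelope is still sharp enough to separate the generic branches; pushing $\lam_*$ further toward the sharp transversality threshold would require refined envelopes (e.g.\ grouping coefficients into blocks), but the stated value already suffices for the applications in this paper.
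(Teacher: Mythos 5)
Note first that the paper does not prove this proposition at all: it is imported verbatim from \cite[Corollary 2.9]{ShmerkinSolomyak06}, with \cite{PeresSolomyak96} cited for an explicit but quantitatively weaker version. Your reduction to the family $g(\lambda)=1+\sum_{n\ge1}c_n\lambda^n$ with $c_n\in\{-1,0,1\}$, the disposal of a neighbourhood of $\lambda=0$, and the derivation of the measure estimate from $\delta$-transversality are correct and coincide with the classical Peres--Solomyak scheme (one small point: the ``single subinterval of length $2\delta/\eps$'' conclusion needs $\delta\le\delta_0$; for $\delta>\delta_0$ one uses the trivial bound $\lambda_*\delta/\delta_0$).

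The genuine gap is the verification of $\delta$-transversality on the whole of $(0,0.66847)$, which is the entire content of the cited result and which your sketch does not establish. The claim that the branch-and-bound ``terminates on all but finitely many branches'' is not an argument: a priori the recursion tree is infinite precisely along the dangerous branches, and proving it finite is logically equivalent to proving transversality with a margin. More importantly, your explanation of where $0.66847$ comes from is incorrect in a way that affects feasibility: this value is not where the geometric tail estimate loses sharpness, but is chosen just below the smallest double zero of the class $\bigl\{1+\sum_{n\ge 1}a_nx^n:\ a_n\in[-1,1]\bigr\}$, which \cite{ShmerkinSolomyak06} show lies only slightly above $0.66847$ (near $0.668475$); a double zero destroys $\delta$-transversality outright, so this is the intrinsic obstruction for any method, not a defect of the envelopes. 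Since $\lambda_*$ sits within roughly $10^{-5}$ of that double zero, the admissible constant $\eps$ is forced below about $10^{-4}$, your envelopes cannot certify anything near $\lambda_*$ before depth of order several dozen, and the finiteness and practicability of the resulting tree is exactly what needs to be demonstrated. The cited sources avoid this: \cite{PeresSolomyak96} certifies $\delta$-transversality on a shorter interval from a single explicit ``$(*)$-function'' witness with no branching (hence the worse constant), while \cite{ShmerkinSolomyak06} reach $0.66847$ by bounding the smallest double zero from below. To turn your write-up into a proof you must either follow one of those routes or actually exhibit and check the finite certificate your algorithm would output.
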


The following lemma is standard.

\begin{lemma} \label{lem:application-trans}
There exists $C>0$ such that for $\iii,\jjj\in\{0,1\}^{\mathbb{N}}$, $\lambda_0\in(\frac{1}{2},\lambda_*)$ and $s<1$,
\[
\int_{\lambda_0}^{\lambda_*} \frac{d\lambda}{|\Pi_{\lambda}(\iii)-\Pi_{\lambda}(\jjj)|^s}\leq C\lambda_0^{-s|\iii\wedge\jjj|}.
\]
\end{lemma}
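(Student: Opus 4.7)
The plan is to reduce the estimate to the transversality estimate of Proposition \ref{prop:transversality} by factoring out the common initial segment, and then apply a standard layer cake argument.

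First I would set $n = |\iii\wedge\jjj|$ and write $\iii' = \sigma^n\iii$, $\jjj' = \sigma^n\jjj$, so that $i'_1 \neq j'_1$. Since the first $n$ symbols of $\iii$ and $\jjj$ coincide,
\[
\Pi_\lambda(\iii) - \Pi_\lambda(\jjj) = \lambda^n\bigl(\Pi_\lambda(\iii')-\Pi_\lambda(\jjj')\bigr).
\]
Using $\lambda \geq \lambda_0$ on the domain of integration, this gives the estimate
\[
\int_{\lambda_0}^{\lambda_*}\frac{d\lambda}{|\Pi_\lambda(\iii)-\Pi_\lambda(\jjj)|^s} \leq \lambda_0^{-ns}\int_{\lambda_0}^{\lambda_*}\frac{d\lambda}{|\Pi_\lambda(\iii')-\Pi_\lambda(\jjj')|^s},
\]
which reduces the problem to showing that the remaining integral is bounded by a constant independent of $\iii',\jjj'$ (only depending on $s$ through the integrability exponent).

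Next I would estimate that remaining integral via the layer cake formula. Writing $f(\lambda) := |\Pi_\lambda(\iii')-\Pi_\lambda(\jjj')|$, one has
\[
\int_{\lambda_0}^{\lambda_*} f(\lambda)^{-s}\, d\lambda = \int_0^\infty \Leb\{\lambda\in(\lambda_0,\lambda_*): f(\lambda) < t^{-1/s}\}\, dt.
\]
Because $i'_1\neq j'_1$, Proposition \ref{prop:transversality} applies and gives $\Leb\{\lambda: f(\lambda)<\delta\}\leq C\delta$. Combined with the trivial bound $\leq \lambda_* - \lambda_0$, I would split the $t$-integral at a threshold $t_0$ chosen so that $t_0^{-1/s}$ equals some fixed constant; on $[0,t_0]$ the integrand is bounded by $\lambda_*-\lambda_0$, and on $[t_0,\infty)$ it is bounded by $Ct^{-1/s}$, which is integrable precisely because $s<1$.

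Putting the two parts together yields a finite bound (depending on $s$) for the inner integral, and combined with the prefactor $\lambda_0^{-ns} = \lambda_0^{-s|\iii\wedge\jjj|}$ this proves the lemma. There is no substantive obstacle in the argument; the only minor point of care is to track that the constant $C$ is allowed to depend on $s$, which is consistent with the statement (and harmless, since $s$ will later be fixed in the range where the energy integral is used).
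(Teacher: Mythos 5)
Your proof is correct and follows essentially the same route as the paper: the paper's own (very terse) proof says ``we may assume $i_1\neq j_1$'' --- which is exactly your factoring out of the common prefix, yielding the $\lambda_0^{-s|\iii\wedge\jjj|}$ factor --- and then invokes Proposition \ref{prop:transversality} together with Fubini, which is precisely your layer-cake computation, convergent because $s<1$. Your closing remark that the constant must be allowed to depend on $s$ (it blows up as $s\to 1$) is a fair observation about the statement, and is harmless for the application since there $s$ is bounded away from $1$.
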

\begin{proof}
We may assume that $i_1\neq j_1$. But then the lemma follows easily from Proposition \ref{prop:transversality} and Fubini.
\end{proof}

We are now able to conclude the proof of the theorem.

\begin{proof}[Proof of Theorem \ref{thm:small-dim-BCs-intermediate}]
Fix $p,q\in (0,1)$. In light of Lemma \ref{lem:upper-bound-relative-BC}, we only need to prove that, for almost every $\lam\in (0,\lambda_*)$,
\[
\underline{d}(\nu_\lam^p,x) \ge \min\left(\frac{H_p^q}{-\log\lam},1\right) \quad \text{for }\nu_\lam^q\text{-a.e. } x.
\]

Fix $\delta,\e>0$ and $\lam_0\in (0,\lam_*)$. Let
\[
s= \min\left(\frac{H_p^q-2\e}{-\log\lam_0},1-\e\right),
\]
and let $\Sigma\subset\{0,1\}^\N$ be a set of $\eta^q$-measure $1-\delta$, on which
\[
\frac{-\log\eta^p[\iii|n]}{n} \rightarrow H_p^q \quad\text{uniformly}.
\]
By uniform convergence, there exists a constant $C'=C'(\e)$ such that
\begin{equation} \label{eq:appl-unif-convergence}
\max_{ \iii\in\{0,1\}^n: \Sigma\cap [\iii]\neq \varnothing } \eta^p[\iii] \le C' e^{-n(H_p^q-\e)}.
\end{equation}
Since $\delta, \e$ and $\lam_0$ are arbitrary, by virtue of Lemma \ref{lem:potential-theoretic}, it is enough to show that
\[
I:= \int_{\lambda_0}^{\lambda_*}\int_{\Pi_\gam(\Sigma)} \int_{\R}  \frac{d\nu_\lam^p(x) d\nu_\lam^q(y)}{|x-y|^s} \,d\lam <\infty.
\]
After changing variables, we may rewrite
\[
I = \int_{\lambda_0}^{\lambda_*} \int_\Sigma \int_{\{0,1\}^\N}\frac{d\eta^p(\iii) d\eta^q(\jjj)}{|\Pi_\lam(\iii)-\Pi_\lam(\jjj)|^s} \,d\lam.
\]
Lemma \ref{lem:application-trans} and Fubini yield the estimate
\begin{align*}
I &\le C \int_\Sigma \int_{\{0,1\}^\N} \lambda_0^{-s|\iii\wedge\jjj|} d\eta^p(\iii)d\eta^q(\jjj)\\
&= C \sum_{n=0}^\infty \lam_0^{-s n} (\eta^p\times\eta^q)(\{(\iii,\jjj):\jjj\in \Sigma, |\iii\wedge\jjj|=n\})\\
&= C \sum_{n=0}^\infty \sum_{\iii\in\{0,1\}^n} \lam_0^{-s n} \eta^p([\iii])\eta^q(\Sigma\cap[\iii]) \\
&\le C \sum_{n=0}^\infty \lam_0^{-sn} \max_{ \iii\in\{0,1\}^n: \Sigma\cap [\iii]\neq \varnothing } \eta^p([\iii])\\
&\le C C'  \sum_{n=0}^\infty \lam_0^{-sn}  e^{-n(H_p^q-\e)},
\end{align*}
where we used \eqref{eq:appl-unif-convergence} in the last line. Since, according to the definition of $s$, $\lam_0^s e^{H_p^q-\e} \ge e^{\e}>1$, the last series converges. This completes the proof.
\end{proof}

\section{Proof of Theorem \ref{thm:continuity-mf-spectrum} and further results}


\subsection{Uniform lower bounds for the local dimension} \label{subsec:uniform-lower-bounds}

For $\lambda=g$, it is known that $d(\nu_{\lambda},x)<1$ for $\nu_{\lambda}$-almost all $x$, and similar results hold for reciprocals of other Pisot numbers \cite{Garsia62}. For the golden ratio, a sharp uniform lower bound for $d(\nu_{\lambda},x)$ was found in \cite{Hu97}. In the general case we can give uniform lower bounds for $\underline{d}(\nu_{\lambda},x)$ which hold for all values of $\lambda$. However in most cases they are substantially less than $1$, and we do not know whether it may happen that $\nu_{\lambda}$ is absolutely continuous but $\underline{d}(\nu_{\lambda},x)<1$ for some $x\in I_\lam$. (Recall that in the biased case, this is possible by Theorem \ref{thm:small-dim-BCs}.)
\begin{theorem} \label{thm:lowerbound}
For any $\lambda\in (\frac{1}{2},1)$ there exist constants $\delta(\lambda),C(\lambda)>0$  such that for all intervals $J\subset I_{\lambda}$,
\[
\nu_{\lambda}(J)\leq C(\lambda)|J|^{\delta(\lambda)}.
\]
In particular, $\min_{x\in I_\lam}\underline{d}(\nu_{\lambda},x)\geq\delta(\lambda)$.

Moreover, for any $k\in\N$, $\delta(\lambda)\rightarrow 1$ as $\lambda\rightarrow 2^{-1/k}$.
\end{theorem}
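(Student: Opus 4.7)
\textbf{Plan for Theorem~\ref{thm:lowerbound}.}

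\textbf{Part 1 (existence of $\delta(\lam)>0$).} I will bound $\nu_\lam(J)$ for intervals of length comparable to $\lam^n$ by a cylinder count and then exploit sub-multiplicativity together with non-atomicity of $\nu_\lam$. Iterating \eqref{eq:ss1} $n$ times yields
\[
\nu_\lam = 2^{-n}\sum_{\iii\in\{0,1\}^n}\delta_{v_\iii}*(\lam^n)_*\nu_\lam,\qquad v_\iii := \sum_{\ell=1}^n i_\ell\lam^\ell,
\]
where $(\lam^n)_*$ denotes the pushforward under multiplication by $\lam^n$. Setting $N(\lam,n):=\sup_{|I|=\lam^n/(1-\lam)}\card\{\iii\in\{0,1\}^n:v_\iii\in I\}$, this gives $\nu_\lam(J)\le 2^{-n}N(\lam,n)$ whenever $|J|\le\lam^n$. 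A prefix/suffix splitting $\iii=\iii'\iii''$ with $|\iii'|=m$, $|\iii''|=n$ yields the essentially sub-multiplicative estimate $N(\lam,m+n)\le 2\,N(\lam,m)N(\lam,n)$, so $\rho(\lam):=\lim_{n\to\infty} N(\lam,n)^{1/n}\in[1,2]$ exists. The same decomposition provides the converse inequality $\nu_\lam(I^+)\ge 2^{-n}\card\{\iii:v_\iii\in I\}$ for nested intervals $I\subset I^+$ of lengths $\lam^n$ and $\lam^n/(1-\lam)$, chosen so that the translated support $v_\iii+\supp((\lam^n)_*\nu_\lam)$ sits inside $I^+$. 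Since $\nu_\lam$ has no atoms (by Jessen--Wintner), $\sup_{|I^+|=\lam^n/(1-\lam)}\nu_\lam(I^+)\to 0$ as $n\to\infty$, forcing $N(\lam,n)/2^n\to 0$. Together with sub-multiplicativity this gives $\rho(\lam)<2$, and $\delta(\lam):=\log(2/\rho(\lam))/|\log\lam|>0$ works; passing from $|J|=\lam^n$ to arbitrary $|J|$ absorbs only a constant factor into $C(\lam)$.

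\textbf{Part 2 ($\delta(\lam)\to 1$ as $\lam\to 2^{-1/k}$).} The key ingredient is the factorization
\[
\nu_\lam \;=\; (\lam^{1-k})_*\nu_{\lam^k}\;*\;(\lam^{2-k})_*\nu_{\lam^k}\;*\;\cdots\;*\;(\lam^{0})_*\nu_{\lam^k},
\]
obtained by regrouping $\sum_n i_n\lam^n$ according to $n\bmod k$. At the cylinder level, writing $n=km$ and splitting the bits across residue classes gives $v_\iii^{(km)}=\sum_{r=1}^k\lam^r u_{\iii,r}$ with $u_{\iii,r}:=\sum_{j=0}^{m-1} i_{kj+r}\mu^j$ and $\mu:=\lam^k$. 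Fixing the bits in the first $k-1$ residue classes arbitrarily ($2^{m(k-1)}$ choices) confines $u_{\iii,k}$ to an interval of length $\mu^{m-1}/(1-\lam)$, which is at most $C_k$ times the reference length $\mu^m/(1-\mu)$; hence the number of admissible last blocks is $\le C_k\, N(\mu,m)$, and altogether
\[
N(\lam,km)\le C_k\cdot 2^{m(k-1)}\cdot N(\mu,m).
\]
Taking $(km)$-th roots yields $\rho(\lam)\le 2^{(k-1)/k}\rho(\mu)^{1/k}$, equivalently $\log(2/\rho(\lam))\ge k^{-1}\log(2/\rho(\mu))$; dividing by $|\log\lam|=k^{-1}|\log\mu|$ gives $\delta(\lam)\ge\delta(\lam^k)$.

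It thus suffices to handle $k=1$, i.e.\ to show $\delta(\mu)\to 1$ as $\mu\to 1/2^+$. At $\mu=1/2$ the $v_\iii^{(n)}$ are distinct dyadic rationals with spacing $2^{-n}$, so $N(1/2,n)\le 2$ for every $n$. By continuity of $\mu\mapsto v_\iii^{(n)}$ (uniform in $\iii\in\{0,1\}^n$ for fixed $n$), given any $n_0$ we have $N(\mu,n_0)\le 2$ once $\mu$ is close enough to $1/2$, whence $\rho(\mu)\le 2\cdot(2/2^{n_0})^{1/n_0}\to 2^{1/n_0}$-like and $\delta(\mu)\ge 1-1/n_0+o_\mu(1)$. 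Letting $n_0\to\infty$ (with $\mu\to 1/2$ accordingly) yields $\delta(\mu)\to 1$; combined with the trivial bound $\delta(\lam)\le 1$ (a nontrivial probability measure cannot satisfy $\nu(I)\lesssim|I|^\delta$ for $\delta>1$), we conclude.

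\textbf{Main obstacle.} The delicate ingredient is non-atomicity of $\nu_\lam$ for \emph{all} $\lam\in(\tfrac12,1)$, including Pisot reciprocals where $\nu_\lam$ is singular; the Jessen--Wintner dichotomy (a Bernoulli convolution is either purely singular-continuous or absolutely continuous, never purely atomic) is what bridges this gap, since the argument cannot appeal to densities. Beyond that, Part~2 is essentially algebraic followed by a short continuity argument, and the transition from dimension $\delta(\mu)$ to $\delta(\lam)$ for $\lam$ close to $2^{-1/k}$ is handled cleanly by the cylinder-counting version of the convolution factorization.
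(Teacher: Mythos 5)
Your proof is essentially correct, but it takes a genuinely different route from the paper's in both halves. For the first assertion the paper simply cites Feng--Lau's Proposition 2.2, whereas you give a self-contained argument: the iterate $\nu_\lam=2^{-n}\sum_{\iii}\delta_{v_\iii}*(\lam^n)_*\nu_\lam$ reduces everything to the cylinder count $N(\lam,n)$, sub-multiplicativity gives the growth rate $\rho(\lam)=\inf_n(2N(\lam,n))^{1/n}$, and continuity of $\nu_\lam$ (Jessen--Wintner/L\'evy) forces $N(\lam,n_0)<2^{n_0-1}$ for some $n_0$, hence $\rho(\lam)<2$ and $\delta(\lam)>0$; this chain of implications is sound (a single good $n_0$ suffices because $\rho$ is an infimum). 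For the second assertion the paper proves the $k=1$ case geometrically (Proposition \ref{prop:unif-bound-near-ahalf}: for $\lam\in\Lambda_k$ at most two level-$k$ cylinders overlap, then an induction bounds $\card J_n(x)\le 2^n$) and passes to general $k$ via the measure-level convolution lemma, which loses $\delta_1+\delta_2-1$ at each step and needs $\delta_i>\frac12$. You instead perturb from the exact dyadic spacing at $\mu=\frac12$ to bound $N(\mu,n_0)$ by an absolute constant, and you run the convolution factorization at the level of cylinder counting, getting $N(\lam,km)\le C_k2^{m(k-1)}N(\lam^k,m)$ and hence the cleaner inequality $\delta(\lam)\ge\delta(\lam^k)$ with no loss and no lower-bound hypothesis on the exponents. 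What the paper's route buys is an explicit, quantitative exponent $\frac{(k-1)\log 2}{k|\log\lam|}$ on the whole region $\Lambda_k$, which is reused verbatim in the proof of Theorem \ref{thm:continuity-mf-spectrum}; your route buys self-containedness (no appeal to Feng--Lau) at the price of a non-effective constant coming from the non-atomicity step. Two cosmetic slips, neither of which affects the argument: a window of length $\mu^{n}/(1-\mu)=2^{-n+1}$ against spacing $2^{-n}$ gives $N(\frac12,n)\le 3$, not $2$, and the displayed bound ``$\rho(\mu)\le 2\cdot(2/2^{n_0})^{1/n_0}$'' should read $\rho(\mu)\le(2N(\mu,n_0))^{1/n_0}\le 6^{1/n_0}$; the limit $\delta(\mu)\to1$ is unaffected.
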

The first part of the theorem follows from \cite[Proposition 2.2]{FengLau09}. For some specific self-similar measures closely related to the multifractal analysis, it also follows from \cite[Proposition 3.4]{Olsen09}. To prove the second part, we first show that it holds for $k=1$.

\begin{prop} \label{prop:unif-bound-near-ahalf}
We have that $\delta(\lambda)\rightarrow 1$ as $\lambda\rightarrow\frac{1}{2}$.
\end{prop}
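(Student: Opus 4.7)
The plan is to combine a coupling argument to the boundary case $\lambda = \frac12$ (Lebesgue measure on $[0,1]$) with a bootstrap iteration of the self-similarity relation \eqref{eq:ss1}. First I would set $c(\lambda) := \sum_{k=1}^\infty |\lambda^k - 2^{-k}| = (2\lambda-1)/(1-\lambda)$, which tends to $0$ as $\lambda \to \frac12$, and observe that $|\Pi_\lambda(\iii) - \Pi_{1/2}(\iii)| \leq c(\lambda)$ for every $\iii \in \{0,1\}^\N$. Since $\nu_{1/2}$ is Lebesgue measure on $[0,1]$, the comparison
\[
\nu_\lambda(J) = \eta(\Pi_\lambda^{-1}(J)) \leq \eta\bigl(\Pi_{1/2}^{-1}(J + [-c(\lambda), c(\lambda)])\bigr) = |J| + 2c(\lambda)
\]
gives, for any ball, $\nu_\lambda(B(x,r)) \leq 4r$ whenever $r \geq c(\lambda)$.

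For $r < c(\lambda)$ I would bootstrap. Set $\rho_k := c(\lambda)^{k+1}$ and aim to show by induction on $k \geq 0$ that there are constants $C_k$ with $C_0 = 4$ and $C_{k+1} \leq C_* C_k$ (for some absolute $C_* > 1$) satisfying $\nu_\lambda(B(x,r)) \leq C_k r$ whenever $r \geq \rho_k$. For the inductive step with $r \in [\rho_{k+1}, \rho_k]$, I would choose $N \in \N$ so that $\lambda^N \sim r/\rho_k \in [c(\lambda), 1]$ and iterate self-similarity:
\[
\nu_\lambda(B(x,r)) = 2^{-N} \sum_{\iii \in \mathcal{I}_N(B(x,r))} \nu_\lambda\bigl(B(S_\iii^{-1}x,\, r/\lambda^N)\bigr),
\]
where $\mathcal{I}_N(B(x,r))$ indexes the level-$N$ cylinders $S_\iii(I_\lambda)$ meeting $B(x,r)$. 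Each summand is at most $C_k (r/\lambda^N)$ by the inductive hypothesis, since $r/\lambda^N \approx \rho_k$. To bound the number of summands I would apply the coupling once more, but now at the discrete level $N$, comparing the distribution $\mu_N$ of $\pi_N(\iii) := \sum_{k=1}^N i_k \lambda^k$ to the uniform $2^{-N}$-spaced dyadic measure $\tilde\mu_N$; this yields $\mu_N([a,b]) \leq (b-a) + 2c(\lambda) + 2^{-N}$, and since $c(\lambda), 2^{-N} \leq \lambda^N$ in the chosen range, it translates to $|\mathcal{I}_N(B(x,r))| \leq C_*(2\lambda)^N$. Combining gives $\nu_\lambda(B(x,r)) \leq C_* C_k r$, closing the induction.

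Iterating, $C_k \leq 4 C_*^k$. For arbitrary small $r$, choosing $k \sim \log r / \log c(\lambda)$ so that $r \in [\rho_{k+1}, \rho_k]$ yields $\nu_\lambda(B(x,r)) \leq 4 C_*^k r \leq C r^{1 - \log C_*/|\log c(\lambda)|}$, so $\delta(\lambda) \geq 1 - \log C_*/|\log c(\lambda)|$. Since $c(\lambda) \to 0$ and hence $|\log c(\lambda)| \to \infty$ as $\lambda \to \frac12$, we conclude $\delta(\lambda) \to 1$. The main obstacle is verifying the discrete-level counting bound $|\mathcal{I}_N(B(x,r))| \leq C_*(2\lambda)^N$ with $C_*$ genuinely independent of $N$, $\lambda$ and $k$; this works because the bootstrap forces $N$ into the narrow range where $\lambda^N$ dominates both error terms $c(\lambda)$ and $2^{-N}$ in the discrete coupling.
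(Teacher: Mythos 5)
Your argument is correct, but it takes a genuinely different route from the paper. The paper works combinatorially: for $\lambda\in\Lambda_k=\{\lambda:0<2\lambda-1<\lambda^{k-1}(1-\lambda)\}$ it identifies $10^{k-1}$ and $01^{k-1}$ as the only level-$k$ words whose cylinders can overlap the opposite first-level cylinder, deduces by induction that a ball of radius $\approx\lambda^{nk}$ meets at most $2^n$ of the $2^{nk}$ level-$nk$ cylinders, and reads off the exponent $\frac{(k-1)\log 2}{k|\log\lambda|}$. You instead run a perturbative coupling: since $|\Pi_\lambda(\iii)-\Pi_{1/2}(\iii)|\le c(\lambda)=(2\lambda-1)/(1-\lambda)$ and $\nu_{1/2}$ is Lebesgue measure on $[0,1]$, you get the linear bound $\nu_\lambda(B(x,r))\le 4r$ down to scale $c(\lambda)$, and then bootstrap through the generations of scales $\rho_k=c(\lambda)^{k+1}$ by iterating \eqref{eq:ss1} $N$ times with $\lambda^N\asymp r/\rho_k$ and counting the contributing level-$N$ cylinders via the same coupling applied to the discrete sums $\sum_{j\le N}i_j\lambda^j$. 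The key counting bound $\card\,\mathcal{I}_N(B(x,r))\le C_*(2\lambda)^N$ does hold with an absolute $C_*$, exactly for the reason you flag: in the chosen range all four error terms ($2r$, $\diam S_{\iii}(I_\lambda)$, $2c(\lambda)$, $2^{-N}$) are $O(\lambda^N)$, so each generation costs only a fixed multiplicative factor $C_*$, giving $\delta(\lambda)\ge 1-\log C_*/|\log c(\lambda)|$. Both routes yield $\delta(\lambda)=1-O(1/|\log(2\lambda-1)|)$, so they are quantitatively comparable. Your coupling argument is more robust and elementary in that it never needs to identify which specific words create overlaps, and it would extend to other digit sets; the paper's cylinder-counting has the separate advantage that the objects $\Lambda_k$, $\eta$ and $J_n(x)$ it constructs are reused verbatim in the proof of the upper bound in Theorem \ref{thm:continuity-mf-spectrum}, which your approach would not supply.
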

\begin{proof}
Given $\iii=i_1\ldots i_n$, we will denote $S_\iii := S_{i_1}\circ \cdots \circ S_{i_n}$. For $k\in\N$, let $$\Lambda_k=\{\lambda:0<2\lambda-1<\lambda^{k-1}(1-\lambda)\}.$$
If $\lambda\in\Lambda_k$, then
$$S_0(I_{\lambda})\cap S_1\circ S_0^{k-2}\circ S_1(I_{\lambda})=\varnothing,$$
and similarly (or by symmetry)
$$S_1(I_{\lambda})\cap S_0\circ S_1^{k-2}\circ S_0(I_{\lambda})=\varnothing.$$
Hence $1 0^{k-1}$ is the only string $\iii$ of length $k$ whose first element is $1$, and such that
$$S_{\iii}(I_{\lambda})\cap S_0(I_\lambda)\neq\varnothing.$$
Similarly, $0 1^{k-1}$ is the only string $\iii$ of length $k$ starting with $0$, and such that
$$S_{\iii}(I_{\lambda})\cap S_1(I_\lambda)\neq\varnothing.$$
We now fix $x\in I_{\lambda}$, and note that there can be at most two distinct $\iii,\jjj\in \{0,1\}^{k}$ such that $x\in S_{\iii}(I_{\lambda})\cap S_{\jjj}(I_{\lambda})$.

Since the intervals are closed, we can choose $\eta>0$ such that for any $x\in I_{\lambda}$ there are still at most two distinct elements $\iii\in \{0,1\}^{k}$ such that $B(x,\eta)\cap S_{\iii}(I_\lambda)\neq\varnothing$. Given $0< r\leq \eta$, we can choose $n$ such that $\lambda^{nk}\eta\leq r\leq \lambda^{(n-1)k}\eta$. If we denote
$$J_n(x)=\{\iii\in \{0,1\}^{kn}:S_{\iii}(I_{\lambda})\cap B(x,\lambda^{(n-1)k}\eta)\neq\varnothing\},$$
then the Bernoulli convolution satisfies
\begin{equation}
\nu_{\lambda}(B(x,r))\leq\frac{\card J_n(x)}{2^{nk}}.  \label{eq:bound-for-BC-using-J}
\end{equation}
We now claim that $\card J_n(x)\leq 2^n$ for all $x$. We have already shown that $J_1(x)\leq 2$ for all $x$, and we now proceed by induction. We assume that for $1\leq l<n$ we have that $J_l(x)\leq 2^l$ for all $x$. Pick $\iii\in J_n(x)$ and write $\iii=\jjj\kkk $, where $|\jjj|=k$ and $|\kkk|=(n-1)k$. Since $S_\jjj(I_\lam)\subset S_\iii(I_\lam)$, we have $S_{\jjj}(I_{\lambda})\cap B(x,\eta)\neq \varnothing$, and therefore there are at most $2$ choices for $\jjj$. Also,
$$S_{\kkk}(I_{\lambda})\cap S_{\jjj}^{-1}B(x,\eta\lambda^{(n-1)k})\neq \varnothing.$$
Note that $|S_{\jjj}^{-1}B(x,\eta\lambda^{(n-1)k})|=\eta\lambda^{(n-2)k}$
and,  by our inductive hypothesis, for $1\leq l<n$ we have that $\#J_l(y)\leq 2^l$ for all $y$. Hence we have that for each $\jjj$ there are at most $2^{n-1}$ choices for $\kkk$. Therefore $\#J_n(x)\leq 2^n$, as claimed.

We now conclude from \eqref{eq:bound-for-BC-using-J} that, for any $x\in I_\lam$,
$$\nu(B(x,r))\leq 2^{-n(k-1)}\leq r^{\frac{n(k-1)\log 2}{nk|\log\lam|+|\log\eta|}} = r^{O(1/n)} r^{\frac{(k-1)\log 2}{k|\log\lam|}}.$$
We have therefore shown that
\[
\text{for all } x\in I_\lam, \quad \underline{d}(\nu,x) \ge \frac{(k-1)\log 2}{k|\log\lam|} \longrightarrow 1 \,\text{ as } \, k\to \infty, \lam\to \frac12,
\]
as claimed.
\end{proof}
This gives the proof of the second part of Theorem \ref{thm:lowerbound} for $k=1$, we now turn to the case $k>1$. We use the following simple lemma.
\begin{lemma}
Let $\nu$ and $\mu$ be measures defined on an interval $I$. Assume there are constants $C_1,C_2>0$ and $\delta_1,\delta_2>\frac{1}{2}$ such that, for any subinterval $J\subset I$, we have $\mu(J)\leq C_1|J|^{\delta_1}$ and $\nu(J)\leq C_2|J|^{\delta_2}$. Then there exists $C_3>0$ such that, for any interval $J_1\subset I+I$, we have $\nu*\mu(J_1)\leq C_3|J_1|^{\delta_1+\delta_2-1}$.
\end{lemma}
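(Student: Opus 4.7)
The plan is a straightforward covering / Fubini computation. Set $r = |J_1|$ and start from the identity
\[
\nu*\mu(J_1) \;=\; \int \mu(J_1 - y)\,d\nu(y).
\]
The first observation is that $J_1 - y$ is an interval of length $r$, so the hypothesis on $\mu$ gives the pointwise bound $\mu(J_1 - y) \le C_1 r^{\delta_1}$, uniformly in $y$. The second is that, since $\mu$ is supported on $I$, the integrand vanishes unless $y \in (J_1 - I) \cap I$, which is a subinterval of $I$ of length at most $|I|$ (in particular, independent of $J_1$).

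Next I would partition this effective range of integration into $N$ consecutive intervals $I_1,\ldots,I_N$ of length $r$, where $N \le \lceil |I|/r\rceil + 1$. The hypothesis on $\nu$ bounds $\nu(I_k) \le C_2 r^{\delta_2}$, and combining this with the pointwise bound on the integrand gives
\[
\int_{I_k} \mu(J_1 - y)\,d\nu(y) \;\le\; C_1 r^{\delta_1} \cdot \nu(I_k) \;\le\; C_1 C_2 r^{\delta_1 + \delta_2}.
\]
Summing over $k$ then yields
\[
\nu*\mu(J_1) \;\le\; N \cdot C_1 C_2 r^{\delta_1 + \delta_2} \;\le\; C_3\, r^{\delta_1 + \delta_2 - 1},
\]
with a constant $C_3$ depending only on $C_1$, $C_2$ and $|I|$, which is the desired estimate.

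There is no substantive obstacle: the argument is elementary and the only point to verify carefully is that the effective domain of $y$ is contained in an interval whose length is $O(|I|)$ and does not grow with $J_1$, so that $N = O(|I|/r)$ uniformly. The hypothesis $\delta_1,\delta_2 > \tfrac{1}{2}$ is not actually used in the estimate; it enters only through the conclusion, since it guarantees $\delta_1 + \delta_2 - 1 > 0$ and hence that the bound is a genuine H\"older-type decay as $r \searrow 0$, which is what the subsequent application (boosting $\delta(\lambda)$ toward $1$ via $k$-fold self-convolutions) will rely on.
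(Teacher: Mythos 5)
Your proposal is correct and is essentially the paper's own argument: the paper bounds the product measure $\nu\times\mu$ of any square of side $r$ by $C_1C_2r^{\delta_1+\delta_2}$ and then covers the diagonal strip $\{(x,y)\in I\times I:\ x+y\in J_1\}$ by $O(|I|/r)$ such squares, which is exactly your Fubini computation with the $y$-range partitioned into $N=O(|I|/r)$ blocks of length $r$. Your closing remark is also accurate: the hypothesis $\delta_1,\delta_2>\tfrac12$ plays no role in the estimate itself and only ensures that the exponent $\delta_1+\delta_2-1$ is positive, which is what the $k$-fold convolution application requires.
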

\begin{proof}
The product measure $\nu\times\mu$ of any square of side length $r$ is at most $C_1C_2 r^{\delta_1+\delta_2}$. The lemma follows easily since $\nu*\mu$ is the diagonal projection of $\nu\times\mu$.
\end{proof}
We now use the fact that $\nu_{\lambda^{\frac{1}{k}}}$ is the convolution of $k$ scaled copies of $\nu_{\lambda}$ to complete the proof of Theorem \ref{thm:lowerbound}.

\subsection{Proof of Theorem \ref{thm:continuity-mf-spectrum}}

In this section we combine ideas from the proofs of Lemma \ref{lem:freq} and Proposition \ref{prop:unif-bound-near-ahalf} in order to prove Theorem \ref{thm:continuity-mf-spectrum}.
\begin{proof}[Proof of Theorem \ref{thm:continuity-mf-spectrum}]
The proof relies on standard facts on the multifractal spectrum of self-similar measures satisfying the open set condition. See \cite[Chapter 2]{Falconer97} for general background on self-similar measures, and \cite{CawleyMauldin} and \cite{ArbeiterPatzschke96} for their multifractal analysis. In particular, it is well-known that the dimension spectrum $f_{1/2,p_0}(\alpha)$ is the Legendre transform of the function $\tau(q)=\tau_{1/2,p_0}(q)$ given by
\[
\left(p_0^q + (1-p_0)^q\right) 2^{\tau(q)} = 1.
\]
Moreover, $f_{1/2,p_0}$ is also the ``coarse'' multifractal spectrum of $\nu_{1/2}^{p_0}$. See \cite[Chapter 11.1]{Falconer97} for the relevant definitions and proofs.

The proof of the lower bound will involve ideas similar to those in Lemma \ref{lem:freq}. We will fix $k\ge 2$ and $\frac{1}{2}<\lambda<g_k$. As in the proof of Lemma \ref{lem:freq} this means that any sequence in $\{0,1\}^{\N}$ which does not contain $0^k$ or $1^k$ will be in $\mathcal{U}_\lambda$. Let $\Sigma_m$ denote the set of all words in $\{0,1\}^m$ which do not consist only of $0$s or $1$s. If we let $m=\left[\frac{k}{2}\right]$, then the set of functions
\begin{equation} \label{eq:ifs-with-separation}
\{ S_{i_1}\circ\cdots \circ S_{i_m}: (i_1,\ldots,i_m)\in\Sigma_m\}
\end{equation}
 will yield an iterated function system satisfying the strong separation condition. The attractor of the system will be denoted by $\mathcal{A}_{\lambda,k}$, and will be a subset of $\Ak_{g_k}\subset \wtil{\Ak}_\lam$ (recall \eqref{eq:definition-A-lam-tilde} and Lemma \ref{lem:nested-unique-exp}).  Given $\iii=i_1\ldots i_n$, let $p_\iii = p_{i_1}\cdots p_{i_n}$, where $p_0=p$ and $p_1=1-p$. Let $\eta_k$ be the number satisfying
 \[
 \sum_{\iii\in\Sigma_m} p_\iii^{\eta_k} = 1.
 \]
 Clearly, $\eta_k\to 1$ as $k\to \infty$. Let $\wtil{\nu}_{\lam}^p$ be the self-similar measure satisfying
\[
\wtil{\nu}_{\lam}^p = \sum_{\iii\in\Sigma_m} p_\iii^{\eta_k}\,\, (\wtil{\nu}_{\lam}^p\circ S_\iii^{-1}).
\]
It follows from Lemma \ref{lem:meas-when-unique-exp} and the strong separation condition for $\wtil{\nu}_{\lam}^p$ that
 \[
 d(\wtil{\nu}_{\lam}^p,x) = \eta_k\, d(\nula^p,x),
 \]
 for all $x\in \mathcal{A}_{\lambda,k}$ such that the left-hand side is defined. Hence
 \[
 f_{\lam,p}(\alpha) \ge f_{\wtil{\nu}_{\lam}^p}(\eta_k\alpha).
 \]
 However, the multifractal structure of $\wtil{\nu}_{\lam}^p$ is precisely known thanks to the strong separation condition (see e.g. \cite[Theorem 11.5]{Falconer97}). In particular, one can easily check that
\[
f_{\wtil{\nu}_{\lam}^p}(\beta) \to f_{\nu_{1/2}^{p_0}}(\alpha) \quad\text{as}\quad (\lam,p,\beta)\to (\frac12, p_0, \alpha).
\]
Since $\eta_k\to 1$ as $k\to\infty$, we obtain the desired lower bound.

For the upper bound we use the coarse multifractal spectrum: given a finite measure $\mu$ on $\R$, let
\[
\wtil{f}_{\mu}(\alpha) = \lim_{\e\searrow 0}\limsup_{r\searrow 0} \frac{\log(N_\mu(\alpha+\e,\alpha-\e; r)) }{-\log r},
\]
where:
\begin{itemize}
\item $N^+_\mu(\alpha;r)$ is the number of intervals $I_j = B((2j-1)r,r)$ such that $\mu(I_j)\ge r^\alpha$;
\item $N^-_\mu(\alpha;r)$ is the number of intervals $I_j = B((2j-1)r,r)$ such that $\mu(I_j)\le r^\alpha$;
\item $N_\mu(\alpha_1,\alpha_2; r) = \min(N^+_\mu(\alpha_1;r), N^-_\mu(\alpha_2;r))$.
\end{itemize}
It is known that $f_\mu(\alpha)\le \wtil{f}_{\mu}(\alpha)$ for any compactly supported Radon measure, for a proof see \cite[Theorem 3.3.1]{Olsen98}. (Although the definition of coarse multifractal spectrum in \cite{Olsen98} is slightly different---it uses
general packings whereas we use mesh grids---it is a simple exercise to show  that $\widetilde{f}_{\mu}(\alpha)$ will be the same.)
We will denote $\wtil{f}_{\lam,p} := \wtil{f}_{\nula^p}$.

We will use ideas and notation from Proposition \ref{prop:unif-bound-near-ahalf}. In particular, we let $\Lambda_k$, $\eta$ and $J_n(x)$ be as in that proposition. We fix $k\in\N$ and $\lam\in\Lambda_k$. Without loss of generality, $\lam<2/3$ (the upper bound $2/3$ is arbitrary, any number smaller than $1$ will do).

For $x\in I_{\lambda}$, we will set
$$P_n(x)=\sup\{p_{i_1}\cdots p_{i_{nk}}:(i_1,\ldots,i_{nk})\in J_n(x)\}.$$
Set $r_n=\lambda^{nk}\eta$. Since $\card J_n(x)\le 2^n$,
\[
\text{for any } x\in I_\lam, \quad \nu_{\lambda}^p(B(x,r_n))\leq 2^n P_n(x).
\]

We claim that if $x_j = (2j-1)r_n$ are the centers of the intervals in the $(2r_n)$-mesh of $I_\lam$, then each $\iii\in \{0,1\}^{n k}$ belongs to at most $M=M(k,\eta)$ of the sets $J_n(x_j)$, where $M$ is independent of $n$. Indeed, if $m=\card\{ j: \iii\in J_n(x_j)\}$, then $\iii\in J_n(x')\cap J_n(x'')$ with $|x''-x'|\ge 2(m-1)r_n$, whence, from the definition of $J_n(x)$, and using $\lam<\frac23$,
\[
2\lam^{nk} \ge \frac{\lam}{1-\lam}\lam^{nk} =  \diam(S_\iii(I_\lam)) \ge 2(m-1) \lam^{nk}\eta - 2\lambda^{(n-1)k}\eta,
\]
and $m\le 1 + \lceil 2^k+\eta^{-1}\rceil =: M$, as claimed.

It follows that
\[
\card\{j: \nula^p(B(x_j,r_n)) \ge r_n^\alpha\} \le M \card\{ \iii\in \{0,1\}^{n k}: p_\iii \ge 2^{-n} r_n^\alpha \}.
\]
Since $\{p_\iii\}$ are precisely the $\nu_{1/2}^p$-measures of the $(2^{-nk})$-mesh intervals of $I_{1/2}=[0,1]$, it follows that
\begin{equation} \label{eq:coarse-ineq-1}
N^+_{\nula^p}(\alpha;r_n) \le M\, N^+_{\nu_{1/2}^p}(\beta_{n,k}; 2^{-nk}),
\end{equation}
where, recalling the definition of $r_n$,
\begin{align*}
\beta_{n,k} &= \frac{\log(2^{-n} r_n^\alpha)}{\log(2^{-nk})} \\
&= \frac{|\log_2\lam|\alpha nk+ n+\alpha\log\eta }{nk}\\
&= |\log_2\lam| \alpha + \frac{1}{k} + O(1/n).
\end{align*}

On the other hand, it follows from \eqref{eq:cylinder-inside-ball} that if $\lam^{\ell+1}/(1-\lam) <\eta$, then $B(x,r_n)$ contains the projected cylinder $\Pi_\lam[\iii|nk+\ell]$, where $x=\pi_\lam(\iii)$. Since $\lam\le \frac23$, we can therefore assume that each ball $B(x,r_n)$ contains a cylinder $\Pi_\lam[\iii|nk+L]$, where $L$ depends only on $\eta$ (and hence only on $k$). Letting $p_* = \min(p,1-p)$, and using that $p_{\iii|nk+L} \ge p_{\iii|nk} p_*^L$, we deduce that
\[
\card\{j: \nula^p(B(x_j,r_n)) \le r_n^\alpha\} \le 2^L\card\{ \iii\in\{0,1\}^{nk}: p_\iii \le p_*^{-L} r_n^\alpha \}.
\]
In other words,
\begin{equation}  \label{eq:coarse-ineq-2}
N^{-}_{\nula^p}(\alpha;r_n) \le 2^L\, N^-_{\nu_{1/2}^p}(\gamma_{n,k}; 2^{-nk}),
\end{equation}
where
\begin{align*}
\gamma_{n,k} &= \frac{\log(p_*^{-L} r_n^\alpha)}{\log(2^{-nk})} \\
&= \frac{|\log_2\lam|\alpha nk- L\log_2(p_*)+\alpha\log\eta }{nk}\\
&= |\log_2\lam| \alpha + O(1/n).
\end{align*}

Combining \eqref{eq:coarse-ineq-1} and \eqref{eq:coarse-ineq-2}, we easily obtain that
\[
f_{\lam,p}(\alpha) \le \wtil{f}_{\lam,p}(\alpha) \le \sup_{ \overline{\alpha}\in \left[|\log_2\lam| \alpha,|\log_2\lam| \alpha+\frac{1}{k}\right]} \wtil{f}_{1/2,p}(\overline{\alpha}).
\]
Recalling that  $\wtil{f}_{1/2,p}(\alpha)=f_{1/2,p}(\alpha)$, for which an explicit formula is known (in particular, it is jointly continuous in $p$ and $\alpha$), the desired upper bound is achieved.

\end{proof}




\end{document}